\newcommand{\norm}[1]{\left\Vert#1\right\Vert}
\newcommand{\set}[1]{\left\{{#1}\right\}}     
\newcommand{\abs}[1]{\lvert#1\rvert}
\newcommand{\restricted}{\mathord{\upharpoonright}}
\newcommand{\Free}{{\mathcal F}}
\newcommand{\lipfree}[1]{\mathcal{F}({#1})}  
\newcommand{\Lip}{{\mathrm{Lip}}_0}
\newcommand{\meas}[1]{\mathcal{M}({#1})} 
\newcommand{\R}{\mathbb{R}}
\renewcommand{\geq}{\geqslant}
\renewcommand{\leq}{\leqslant}
\DeclareMathOperator{\dens}{dens}
\newtheorem{thm}{Theorem}[section]
\newtheorem{prop}[thm]{Proposition}
\newtheorem{coro}[thm]{Corollary}
\newtheorem{lema}[thm]{Lemma}
\newtheorem{ques}[thm]{Question}
\theoremstyle{definition}
\newtheorem{ejem}[thm]{Example}
\numberwithin{equation}{section}
\begin{document}

\title{A note on nonseparable Lipschitz-free spaces}

\author[R. J. Aliaga]{Ram\'on J. Aliaga}
\address[R. J. Aliaga]{Instituto Universitario de Matem\'atica Pura y Aplicada,
Universitat Polit\`ecnica de Val\`encia,
Camino de Vera S/N,
46022 Valencia, Spain}
\email{raalva@upv.es}

\author[G. Grelier]{Guillaume Grelier}
\address[G. Grelier]{Universidad de Murcia, Departamento de Matem\'aticas, Campus de Espinardo, 30100 Murcia, Spain} \email{g.grelier@um.es}

\author[A. Proch\'azka]{Anton\'in Proch\'azka}
\address[A. Proch\'azka]{
Universit\'e de Franche Comt\'e, CNRS, LmB (UMR 6623), F-25000 Besan\c con, France
}
\email{antonin.prochazka@univ-fcomte.fr}

\subjclass[2020]{Primary 46B20; Secondary 46B26, 46E15.}

\keywords{Lipschitz-free space, nonseparable Banach space, sequentially compact, Radon-Nikodym property.}

\begin{abstract}
We prove that several classical Banach space properties are equivalent to separability for the class of Lipschitz-free spaces, including Corson's property ($\mathcal{C}$), Talponen's Countable Separation Property, or being a G\^ateaux differentiability space. On the other hand, we single out more general properties where this equivalence fails. In particular, the question whether the duals of non-separable Lipschitz-free spaces have a weak$^*$ sequentially compact ball is undecidable in ZFC. Finally, we provide an example of a nonseparable dual Lipschitz-free space that fails the Radon-Nikod\'ym property.
\end{abstract}

\maketitle

\section{Introduction}

The main goal of this note is to show that certain well-known Banach space properties, many of which can be interpreted as generalizations of separability, are equivalent to separability within the class of Lipschitz-free spaces. Our study focuses on properties relating to differentiability of convex functions (e.g. being weak Asplund, or a G\^ateaux differentiability space), topological properties of the dual ball (e.g. being Eberlein, or angelic, or sequentially compact, with respect to the weak$^*$ topology) and structural properties such as being weakly compactly generated (WCG), weakly countably determined (WCD), or weakly Lindel\"of determined (WLD). Some of these equivalences are already known: for instance, weakly compact subsets of Lipschitz-free spaces were already shown to be separable by Kalton in \cite{Kalton}. The takeaway is that nonseparable Lipschitz-free spaces are, in a sense, very far from being separable in comparison to general Banach spaces.

Section \ref{sec:separability} is devoted to proving these equivalences. Many of them are straightforward and boil down to the fact that nonseparable Lipschitz-free spaces contain $\ell_1(\omega_1)$, a fact first observed by H\'ajek and Novotn\'y \cite{HajekNovotny}, but others are more subtle. For instance, the equivalence with Talponen's countable separation property depends on a non-trivial intersection property of families of Lipschitz-free spaces due to Aliaga and Perneck\'a \cite{AP}. Equivalence between separability and weak$^*$ sequential compactness of the dual ball turns out to be undecidable in ZFC. We isolate the precise axiomatic condition under which equivalence holds; in particular, it holds under the Continuum Hypothesis. We also briefly discuss the questions of which Lipschitz-free spaces have a weak$^*$ separable dual, or dual ball (which are never equivalent to separability).

Finally, in Section \ref{sec:rnp} we prove that spaces of measures on Polish spaces can be realized as Lipschitz-free spaces. This yields an example of a nonseparable Lipschitz-free space, namely $C([0,1])^*$, that admits an isometric predual but fails the Radon-Nikod\'ym property, and moreover fails to have a predual that consists entirely of locally flat functions. Whether the latter is possible in the separable setting is still an open problem.

Our notation is standard and follows textbooks such as \cite{FHHMZ}. Throughout the document, the symbol $X$ is used for a real Banach space, and its unit ball is denoted by $B_X$. Likewise, $(M,d)$ is always a complete metric space that we assume to be pointed, that is, it has a distinguished point $0$ (the choice of which is inconsequential), and we denote by $\text{dens}(M)$ the density character of $M$, that is, the minimum cardinality of a dense subset of $M$. Let us briefly recall the construction of Lipschitz-free spaces. We consider the vector space $\Lip(M)$ of Lipschitz functions $f:M\to\R$ such that $f(0)=0$. Endowed with the norm
$$
\norm{f} = \sup\set{ \frac{\abs{f(x)-f(y)}}{d(x,y)}\ :\ x\neq y\in M },
$$
$\Lip(M)$ is a Banach space. If $x\in M$, we define $\delta(x):M\to\Lip(M)^*$ by $\delta(x)(f)=f(x)$. It is easy to see that $\delta:M\to \Lip(M)^*$ is an isometry. The Lipschitz-free space over $M$ is then $\lipfree{M}=\overline{\text{span}}\,\delta(M)$. This is a Banach space with the norm inherited from $\Lip(M)^*$ that satisfies the following linearization property: any Lipschitz mapping $f:M\to X$ such that $f(0)=0$ can be extended to a unique operator $\widehat{f}:\lipfree{M}\to X$ such that $\widehat{f}\circ\delta=f$, and $||\widehat{f}||$ is the Lipschitz constant of $f$. In particular, $\lipfree{M}^*=\Lip(M)$, and bi-Lipschitz equivalent metric spaces have isomorphic Lipschitz-free spaces. Moreover, if $0\in N\subset M$ then $\lipfree{N}$ can be identified with the subspace $\overline{\text{span}}\,\delta(N)$ of $\lipfree{M}$. It is elementary that $\lipfree{M}$ is separable if and only if $M$ is separable, if and only if $M$ does not contain an uncountable subset that is uniformly discrete (i.e. for which non-zero distances have a strictly positive lower bound); we will use this repeatedly without mention. For further reference on Lipschitz-free spaces, we refer the reader to \cite{Weaver2} (where they are referred to as \textit{Arens-Eells spaces}).

Last, we recall a construction that will feature several times in this note. The \textit{metric sum} of a family $\set{(M_\gamma,d_\gamma):\gamma\in\Gamma}$ of pointed metric spaces is defined as the metric space $M=\{0\}\cup\bigcup_{\gamma\in\Gamma}(M_\gamma\setminus\{0\})$ with the metric given by $d(x,x')=d_\gamma(x,x')$ for $x,x'\in M_\gamma$ and $d(x,x')=d_\gamma(x,0)+d_{\gamma'}(0,x')$ for $x\in M_\gamma$, $x'\in M_{\gamma'}$. That is, $M$ is obtained by taking the disjoint union of the $M_\gamma$'s, identifying their base points, and having all paths between different $M_\gamma$'s go through the base point. $\lipfree{M}$ is then linearly isometric to the $\ell_1$-sum of the spaces $\lipfree{M_\gamma}$ (see e.g. \cite[Proposition 3.9]{Weaver2}).

\section{Properties equivalent to separability}
\label{sec:separability}

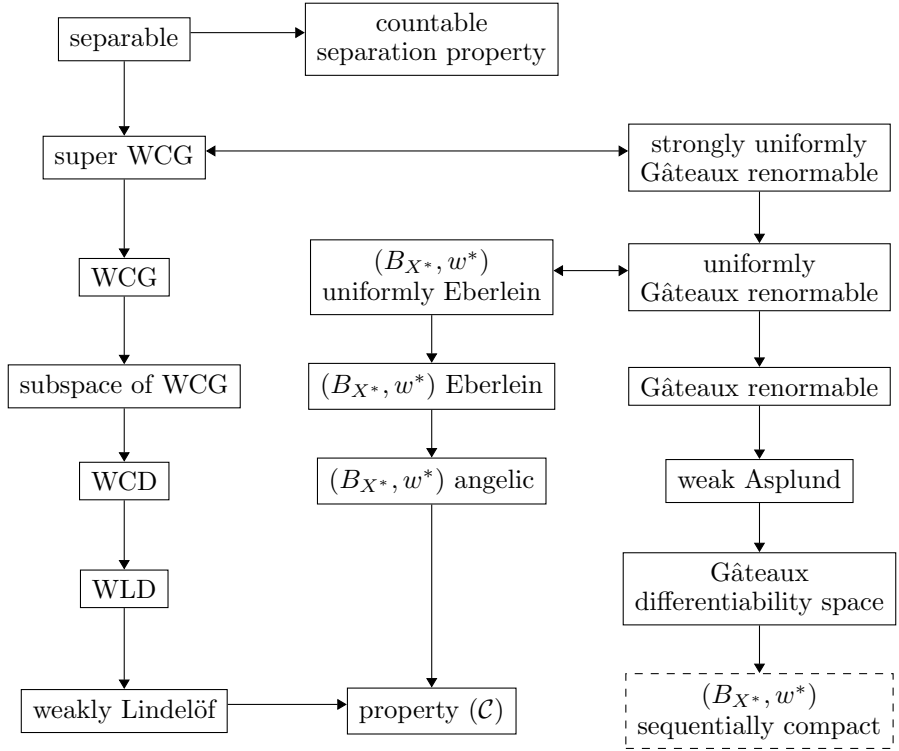
\begin{figure*}
  \centering
  \begin{tikzcd}[cells={nodes={draw}}]
    \txt{separable} \ar[d] \ar[r]
    & \txt{countable\\separation property} \\
    \txt{super WCG} \ar[rr,leftrightarrow] \ar[d]
    & & \txt{strongly uniformly\\G\^ateaux renormable} \ar[d] \\
    \txt{WCG} \ar[d]
    & \txt{$(B_{X^*},w^*)$\\uniformly Eberlein} \ar[d]
    & \txt{uniformly\\G\^ateaux renormable} \ar[d] \ar[l,leftrightarrow] \\
    \txt{subspace of WCG} \ar[d]
    & \txt{$(B_{X^*},w^*)$ Eberlein} \ar[d]
    & \txt{G\^ateaux renormable} \ar[d] \\
    \txt{WCD} \ar[d]
    & \txt{$(B_{X^*},w^*)$ angelic} \ar[dd]
    & \txt{weak Asplund} \ar[d] \\
    \txt{WLD} \ar[d]
    & 
    & \txt{G\^ateaux\\differentiability space} \ar[d] \\
    \txt{weakly Lindel\"of} \ar[r]
    & \txt{property ($\mathcal{C}$)}
    & |[dashed]| \txt{$(B_{X^*},w^*)$\\sequentially compact}
  \end{tikzcd}
  \caption{These implications hold for a general Banach space $X$.}
  \label{fig:implications}
\end{figure*}

The chart in Figure \ref{fig:implications} includes several well-known Banach space properties, all of which hold for separable Banach spaces, together with some of the implications between them. References for the properties and the implications between them will be provided below as they are addressed. We stress that the list of shown implications is by no means exhaustive, but it will be enough for our purposes. The main goal of this section is to prove the following theorem:

\begin{thm}
For a Lipschitz-free space, all properties in solid boxes in Figure \ref{fig:implications} are equivalent to separability. Assuming the Continuum Hypothesis, the property in the dashed box is also equivalent.
\end{thm}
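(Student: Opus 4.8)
The plan is to argue along the implications already displayed in Figure~\ref{fig:implications}, reducing the many properties to a small number of key facts. First I would fix a nonseparable Lipschitz-free space $\lipfree{M}$; by the elementary characterization recalled in the introduction, $M$ contains an uncountable uniformly discrete subset, and after rescaling we may assume it is $1$-separated. As observed by H\'ajek and Novotn\'y, an uncountable $1$-separated family $\set{x_\gamma:\gamma\in\omega_1}\subseteq M$ gives rise to an isometric copy of $\ell_1(\omega_1)$ inside $\lipfree{M}$: indeed, taking the metric sum viewpoint, or directly estimating $\norm{\sum a_\gamma \delta(x_\gamma)}$ via a suitable Lipschitz function that separates the points, one gets that $\gamma\mapsto\delta(x_\gamma)$ spans a subspace isometric (or at least isomorphic, which suffices) to $\ell_1(\omega_1)$. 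Since $\ell_1(\omega_1)$ fails property $(\mathcal C)$, and property $(\mathcal C)$ passes to closed subspaces, $\lipfree{M}$ fails property $(\mathcal C)$. Because property $(\mathcal C)$ sits at the bottom of the left and middle columns of the chart — every solid box above it implies it — all of those properties (WLD, WCD, subspace of WCG, WCG, super WCG, all the Eberlein/uniformly Eberlein dual-ball conditions, weakly Lindel\"of, angelicity of $(B_{X^*},w^*)$, and all the G\^ateaux renormability variants) fail as well. This handles the entire left and center portions of the diagram in one stroke.

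Next I would deal with the G\^ateaux differentiability properties in the right column — being a G\^ateaux differentiability space and being weak Asplund — which are \emph{not} implied by property $(\mathcal C)$ and so require a separate argument. Here the natural route is again through $\ell_1(\omega_1)$: it is classical that $\ell_1(\omega_1)$ (equivalently, any nonseparable $L_1(\mu)$-type space, or any space containing an uncountable $\ell_1$-basis) fails to be a G\^ateaux differentiability space, for instance because the norm $\norm{\cdot}_1$ has no point of G\^ateaux differentiability and, more to the point, there is a continuous convex function on it whose set of G\^ateaux-differentiability points is empty or at least not residual. Since being a G\^ateaux differentiability space and being weak Asplund are both inherited by closed subspaces (this is where one must be slightly careful and cite the right stability result), $\lipfree{M}$ fails both. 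The conjunction of this paragraph and the previous one shows that every solid-box property forces $M$ to be separable; the converse, that separability implies all of them, is immediate since separable Banach spaces enjoy every property in the chart.

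It remains to treat the countable separation property (top-right solid box) and the dashed box, weak$^*$ sequential compactness of $B_{X^*}=B_{\Lip(M)}$. For the countable separation property the implication "separable $\Rightarrow$ CSP" is trivial, so the content is the converse; here the plan is to invoke the intersection property of families of Lipschitz-free spaces due to Aliaga and Pernack\'a \cite{AP}. Concretely, writing $M$ as (a subset of) a metric sum that exhibits the uncountable uniformly discrete set, one produces an uncountable family of subspaces $\set{\lipfree{N_\alpha}}$ of $\lipfree{M}$ whose "separated" intersections behave in a way incompatible with the CSP — essentially reproducing the standard witness that $\ell_1(\omega_1)$, or a long $\ell_1$-sum, fails Talponen's property — so that $\lipfree{M}$ cannot have the CSP unless $M$ is separable. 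For the dashed box, the point is that $B_{\ell_1(\omega_1)^*}=B_{\ell_\infty(\omega_1)}$ is weak$^*$ sequentially compact if and only if a certain cardinal/combinatorial hypothesis holds — this is exactly the Efimov-type phenomenon behind $\mathfrak p>\omega_1$ versus CH — so the statement is genuinely independent of ZFC; under CH one shows that $(B_{\Lip(M)},w^*)$ fails to be weak$^*$ sequentially compact whenever $M$ is nonseparable by exhibiting, via the $\ell_1(\omega_1)$ copy, a bounded sequence in $\Lip(M)$ with no weak$^*$-convergent subsequence (a copy of $\ell_\infty(\omega_1)$'s standard bad sequence pulled back through the quotient $\Lip(M)=\lipfree{M}^*\twoheadrightarrow\ell_1(\omega_1)^*$). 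The main obstacle I anticipate is the countable separation property: unlike the other properties it is not obviously a "three-space" or subspace-inherited property in the form we need, so extracting the failure of CSP from the mere presence of $\ell_1(\omega_1)$ seems to genuinely require the nontrivial Lipschitz-free intersection theorem of \cite{AP} rather than a soft Banach-space argument, and getting the quantitative bookkeeping of that intersection property to line up with Talponen's definition will be the delicate part.
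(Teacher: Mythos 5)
Your overall strategy --- reducing to the terminal nodes of the chart and exploiting the copy of $\ell_1(\omega_1)$ present in every nonseparable free space --- is essentially the paper's, and your treatments of property $(\mathcal C)$, of the CSP, and of the dashed box under CH are all viable in outline. The one genuine gap is the G\^ateaux differentiability step: you assert that being a GDS (and being weak Asplund) is ``inherited by closed subspaces'' and flag that you would need to ``cite the right stability result,'' but no such result is available --- heredity of weak Asplundness and of the GDS property under passing to arbitrary closed subspaces is not a known theorem (for weak Asplund it is a well-known open problem). The stability that actually holds, and that the paper uses, is that GDS passes to continuous linear images with dense range, hence to \emph{complemented} subspaces \cite[Proposition 6.8]{Phelps}. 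This is exactly why the precise form of the H\'ajek--Novotn\'y result matters: \cite[Proposition 3]{HajekNovotny} provides a \emph{complemented} copy of $\ell_1(\omega_1)$ in every nonseparable $\lipfree{M}$, not merely an isometric one. Your copy $\overline{\mathrm{span}}\,\set{\delta(x_\gamma):\gamma<\omega_1}$ over a $1$-separated set can be upgraded to a complemented one (this is what H\'ajek and Novotn\'y prove), so the gap is repairable, but as written the argument for GDS --- and hence for weak Asplundness and for plain G\^ateaux renormability, which in the chart feed into GDS rather than into property $(\mathcal C)$ as your first paragraph suggests --- does not close.

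Two smaller points. For the dashed box, the cardinal governing $w^*$-sequential compactness of $B_{\ell_\infty(\omega_1)}=B_{\ell_1(\omega_1)^*}$ is the splitting number $\mathfrak s$, not $\mathfrak p$; the paper proves the sharp statement that $B_{\Lip(M)}$ is $w^*$-sequentially compact if and only if $\dens(M)<\mathfrak s$, by encoding a splitting family directly into peak functions on a uniformly discrete subset of $M$, which under CH ($\mathfrak s=\aleph_1$) yields the claimed equivalence; your pull-back of a bad sequence through the quotient $\Lip(M)\twoheadrightarrow\ell_1(\omega_1)^*$ is a correct alternative for the CH direction, though it does not give the sharp characterization. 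For the CSP, you correctly identify that no soft subspace argument will do and that the intersection theorem is essential; the concrete implementation in the paper is to take the decreasing tails $A_\gamma=\{0\}\cup\bigcup_{\lambda\ge\gamma}B_\lambda$ of a uniformly separated family of balls, so that $\bigcap_{\gamma<\omega_1}\lipfree{A_\gamma}=\lipfree{\bigcap_{\gamma<\omega_1}A_\gamma}=\{0\}$ by \cite[Theorem 2.1]{APPP}, and then to invoke \cite[Theorem 4.1]{Talponen}; your sketch is compatible with this but leaves the construction of the chain unspecified.
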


Because of the many implications that hold for general Banach spaces, it will suffice to prove the equivalence to separability of a few properties at the end of the implication chains, namely:
\begin{itemize}
\item being a G\^ateaux differentiability space,
\item Corson's property ($\mathcal{C}$),
\item having a weak$^\ast$ sequentially compact dual ball (assuming CH), and
\item the Countable Separation Property.
\end{itemize}
These will be proved in separate subsections below; see Proposition \ref{GDS-Corson}, Corollary \ref{weakstarseq_lip}, and Proposition \ref{csp}.

\subsection{Differentiability properties and Corson's property (\texorpdfstring{$\mathcal{C}$}{C})}
\label{sub:differentiability}

We consider first properties relating to the differentiability of convex functions in Lipschitz-free spaces in terms of separability. A Banach space $X$ is a \textit{G\^ateaux differentiability space} (GDS) if every continuous convex function defined on an open convex subset $U$ of $X$ is G\^ateaux differentiable on a dense subset of $U$. If that dense set can be moreover chosen to be $G_\delta$, then $X$ is \textit{weak Asplund}.
It has been shown in \cite{GDS} that the class of G\^ateaux differentiability spaces is strictly larger than the class of weak Asplund spaces.

On the other hand, recall that $X$ has \textit{Corson's property} ($\mathcal{C}$) if for every family $\mathcal{A}$ of closed convex sets in $X$ with empty intersection there is a countable subfamily $\mathcal{B}$ of $\mathcal{A}$ with empty intersection.

\begin{prop}\label{GDS-Corson}
If a Lipschitz-free space is a GDS, or has Corson's property ($\mathcal{C}$), then it is separable.
\end{prop}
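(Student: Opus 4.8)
The plan is to argue by contradiction: suppose $\lipfree{M}$ is nonseparable, so $M$ contains an uncountable uniformly discrete subset, and show that $\lipfree{M}$ then fails both GDS and property ($\mathcal{C}$). The standard route, going back to H\'ajek--Novotn\'y, is to produce an isometric (or at least isomorphic) copy of $\ell_1(\omega_1)$ inside $\lipfree{M}$, and then invoke the fact that $\ell_1(\omega_1)$ is neither a GDS nor has property ($\mathcal{C}$), together with the fact that both properties pass to closed subspaces. So the first step is to recall or re-prove that a nonseparable $\lipfree{M}$ contains $\ell_1(\omega_1)$: pick an uncountable set $\{x_\gamma : \gamma < \omega_1\} \subseteq M$ with $d(x_\gamma, x_{\gamma'}) \geq \varepsilon > 0$ for $\gamma \neq \gamma'$ and (after possibly passing to a further uncountable subset) with all $d(x_\gamma, 0)$ comparable; then the map $\gamma \mapsto \delta(x_\gamma)$ spans an $\ell_1(\omega_1)$ inside $\lipfree{M}$, which one sees by testing against suitable Lipschitz functions — e.g. for finitely many signs $\varepsilon_\gamma$ one builds $f \in \Lip(M)$ with $f(x_\gamma) = \varepsilon_\gamma \cdot (\text{const})$ using the uniform discreteness and McShane extension. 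This gives a subspace linearly isomorphic to $\ell_1(\omega_1)$.

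The second step is the hereditary behaviour of the two properties. Property ($\mathcal{C}$) is inherited by closed subspaces (this is classical; a family of closed convex sets in the subspace is also such a family in the whole space), so it suffices to note $\ell_1(\omega_1)$ fails ($\mathcal{C}$): a standard witness is the family of closed half-spaces or the family $\{C_\alpha : \alpha < \omega_1\}$ where $C_\alpha = \overline{\text{conv}}\{e_\gamma : \gamma \geq \alpha\}$ (with $e_\gamma$ the unit vectors), which has empty intersection but no countable subfamily with empty intersection — or more simply one quotes that property ($\mathcal{C}$) implies every bounded subset is weakly separable in the appropriate sense / implies WLD-type behaviour which $\ell_1(\omega_1)$ lacks. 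For GDS, one uses that being a GDS is also inherited by closed subspaces (Larman--Phelps / see \cite{FHHMZ}); and $\ell_1(\omega_1)$ is not a GDS because, for instance, its norm — a continuous convex function — is nowhere G\^ateaux differentiable on an uncountable-dimensional space in the required dense-set sense, or because $\ell_1(\omega_1)$ admits an equivalent norm (indeed its own norm) whose dual unit ball is not weak$^*$ separable, obstructing the Preiss--Phelps--Namioka type characterisations. Either citation suffices; the cleanest is to quote that GDS spaces cannot contain $\ell_1(\omega_1)$, which is known.

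Actually, to keep the argument self-contained and uniform, I would phrase both halves through a single known obstruction. The cleanest unifying statement is: \emph{a Banach space containing an isomorphic copy of $\ell_1(\omega_1)$ is neither a GDS nor has property ($\mathcal{C}$).} The ($\mathcal{C}$) part follows since ($\mathcal{C}$) passes to subspaces and $\ell_1(\omega_1)$ fails it. The GDS part follows since GDS passes to subspaces and $\ell_1(\omega_1)$ is not a GDS — for the latter, one exhibits a continuous convex function on $\ell_1(\omega_1)$ with no point of G\^ateaux differentiability in a dense set, which can be taken from the literature on nonseparable differentiability theory. So the write-up reduces to: (i) nonseparable $\Rightarrow$ contains $\ell_1(\omega_1)$, citing \cite{HajekNovotny}; (ii) quote subspace-heredity of GDS and ($\mathcal{C}$); (iii) quote that $\ell_1(\omega_1)$ has neither property.

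The main obstacle is purely expository: being careful that the two "pass to subspaces" facts are cited correctly (GDS heredity to \emph{closed linear subspaces} is the subtle one and should be referenced precisely), and making sure the $\ell_1(\omega_1)$-embedding step handles the base-point distances — the quick fix being to pass to an uncountable subset on which $d(\cdot,0)$ lies in a bounded interval bounded away from $0$, after which the McShane extension argument produces the required test functionals with uniformly controlled Lipschitz norm. No genuinely hard estimate is involved; the content is the citation of \cite{HajekNovotny} for the embedding and the structural failure of both properties for $\ell_1(\omega_1)$.
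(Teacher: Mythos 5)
Your overall route is the same as the paper's: cite H\'ajek--Novotn\'y for a copy of $\ell_1(\omega_1)$ inside a nonseparable $\lipfree{M}$, then kill both properties by heredity plus the known failure of both for $\ell_1(\omega_1)$. The property~($\mathcal{C}$) half is fine as you wrote it: ($\mathcal{C}$) does pass to closed subspaces and $\ell_1(\omega_1)$ fails it.

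The genuine gap is in the GDS half. You invoke heredity of the GDS property under passing to arbitrary closed subspaces (``Larman--Phelps / see \cite{FHHMZ}''), but no such theorem is available: whether a closed subspace of a GDS (or of a weak Asplund space) is again a GDS is a long-standing open problem, and the standard references (e.g.\ \cite[Proposition 6.8]{Phelps}) only give heredity for \emph{complemented} subspaces. For the same reason, your fallback ``GDS spaces cannot contain $\ell_1(\omega_1)$, which is known'' is not a quotable fact; and the route through weak$^*$ sequential compactness of the dual ball ($X$ GDS $\Rightarrow$ $B_{X^*}$ $w^*$-sequentially compact, which does pass to subspaces) only rules out $\ell_1(\omega_1)$ when $\mathfrak{s}=\aleph_1$, so it is not a ZFC argument --- precisely the point of Section 2.2 of the paper. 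The fix is exactly the detail your sketch drops: \cite[Proposition 3]{HajekNovotny} produces a \emph{complemented} isomorphic copy of $\ell_1(\omega_1)$ in $\lipfree{M}$ (your hand-built copy via bump functions is only a subspace, not obviously complemented), and then GDS heredity for complemented subspaces together with the nowhere G\^ateaux differentiability of the $\ell_1(\omega_1)$-norm finishes the argument. With that single correction your proof coincides with the paper's.
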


\begin{proof}
Suppose that $\lipfree{M}$, or equivalently $M$, is not separable. By \cite[Proposition 3]{HajekNovotny}, $\lipfree{M}$ contains a complemented subspace isomorphic to $\ell_1(\omega_1)$.

The norm of $\ell_1(\omega_1)$ is nowhere G\^ateaux differentiable (see \cite[Example 1.6.c]{DGZ}), hence $\ell_1(\omega_1)$ is not a GDS. Since the property of being a GDS is inherited by complemented subspaces (see for example \cite[Proposition 6.8]{Phelps}) and is invariant under isomorphism, we conclude that $\lipfree{M}$ is not a GDS. 

Similarly, property ($\mathcal{C}$) is stable under isomorphism and passing to subspaces, and it is known that $\ell_1(\omega_1)$ does not have this property (see \cite[Exercise 14.46]{FHHMZ}), so $\lipfree{M}$ must fail property ($\mathcal{C}$). Alternatively, since $\dens(C[0,\omega_1])=\aleph_1$, there is a quotient map from $\ell_1(\omega_1)$ onto $C[0,\omega_1]$, and the latter is known to fail Corson's property ($\mathcal C$) (see \cite[Theorem 14.36]{FHHMZ}) and this property is stable by quotients.
\end{proof}

Proposition \ref{GDS-Corson} already yields the equivalence of most of the properties in Figure \ref{fig:implications} for Lipschitz-free spaces. For instance, $\Free(M)$ is separable if and only if it admits an equivalent ((strongly) uniformly) G\^ateaux differentiable norm. Indeed, existence of such norm on a space $X$ implies that $X$ is weak Asplund \cite[Corollary 4.2.5]{Fabian}, and hence a GDS. Note that a Lipschitz-free space $\lipfree{M}$ can only be Asplund if it is finite-dimensional, that is, if $M$ is finite, as any infinite-dimensional Lipschitz-free space contains an isomorphic copy of $\ell_1$ \cite[Proposition 3]{HajekNovotny}, which is not Asplund. In particular, $\Lip(M)$ is WCG if and only if $M$ is finite (see \cite[Theorem 2.43]{Phelps}).

On the other hand, the equivalence with property ($\mathcal{C}$) yields equivalence for the properties in the left column of Figure \ref{fig:implications} including, in order of increasing generality, being WCG, a subspace of a WCG space, WCD, WLD, and weakly Lindel\"of (see e.g. Theorem 3.8 in \cite{Handbook} and the paragraph preceding it). Let us also mention super WCG spaces, a notion that was introduced in \cite{Raja} and shown to agree with strong uniform G\^ateaux renormability. Similarly, by combining Proposition \ref{GDS-Corson} and well known implications, separability of $\Free(M)$ is equivalent to any of the following properties of $(B_{\Lip(M)},w^*)$, in increasing order of generality: uniform Eberlein, Eberlein and angelic. Indeed, $X$ admits an equivalent uniformly G\^ateaux norm if and only if $(B_{X^*},w^*)$ is uniformly Eberlein \cite[Theorem 14.16]{FHHMZ}, and if $(B_{X^*},w^*)$ is angelic then $X$ has property ($\mathcal{C}$) by \cite[Theorem 14.37]{FHHMZ}.

\subsection{Weak\texorpdfstring{$^*$}{*} sequential compactness of the dual ball}

Since every GDS has weak$^*$ sequentially compact dual ball (see \cite[Theorem 2.1.2]{Fabian}), it is natural to ask whether $\Free(M)$ has to be separable if $B_{\Lip(M)}$ is weak$^*$ sequentially compact.
It turns out that this question is undecidable in ZFC.

We denote by $[\mathbb N]^\omega$ the family of all infinite subsets of $\mathbb N$. Using the notation from \cite{vanDouwenHB}, we say that a subset $\mathscr{S}\subset [\mathbb N]^\omega$ is a \textit{splitting family} if it satisfies the following: for any $A\in [\mathbb N]^\omega$ there is $S\in\mathscr{S}$ such that $A\cap S$ and $A\setminus S$ are both infinite. The \textit{splitting cardinal} $\mathfrak{s}$ is defined as the smallest possible cardinality of a splitting family in $[\mathbb N]^\omega$. It is easy to check that $\aleph_1\leq\mathfrak{s}\leq\mathfrak{c}$, and the four possibilities $\aleph_1=\mathfrak{s}=\mathfrak{c}$ (i.e. the Continuum Hypothesis), $\aleph_1=\mathfrak{s}<\mathfrak{c}$, $\aleph_1<\mathfrak{s}=\mathfrak{c}$, $\aleph_1<\mathfrak{s}<\mathfrak{c}$ are all known to be consistent in ZFC \cite[Theorem 5.1]{vanDouwenHB}.

\begin{thm}\label{weakstarseq}
Let $M$ be a complete metric space. Then the following are equivalent:
\begin{enumerate}
    \item[(i)] $\dens(M)\geq\mathfrak{s}$,
    \item[(ii)] $M$ contains a uniformly discrete subset of cardinality $\mathfrak{s}$,
    \item[(iii)] $B_{\Lip(M)}$ is not $w^*$-sequentially compact.
\end{enumerate}
\end{thm}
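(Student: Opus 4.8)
The plan is to prove the cycle (iii)$\Rightarrow$(i)$\Rightarrow$(ii)$\Rightarrow$(iii), the remaining implication (ii)$\Rightarrow$(i) being immediate: the $\varepsilon/2$-balls centred at the points of an $\varepsilon$-separated set $D\subseteq M$ are pairwise disjoint and each must meet a fixed dense set, whence $\dens(M)\geq\abs D$. The one genuinely set-theoretic ingredient I need is that $\mathfrak s$ has uncountable cofinality; this is classical, and for self-containedness I would prove it thus: were a splitting family the increasing union $\bigcup_n\mathscr S_n$ with every $\abs{\mathscr S_n}<\mathfrak s$, I would recursively build $\mathbb N\supseteq A_0\supseteq A_1\supseteq\cdots$ in $[\mathbb N]^\omega$ with $A_n$ not split by any member of $\mathscr S_n$ — possible because $\{S\cap A_{n-1}:S\in\mathscr S_n\}$, of cardinality $<\mathfrak s$, is not splitting inside $A_{n-1}$ — and then any pseudo-intersection of $(A_n)$ would be split by no member of $\bigcup_n\mathscr S_n$, a contradiction.

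\emph{(i)$\Rightarrow$(ii).} For each $m\geq 1$ fix, by Zorn's lemma, a maximal $(1/m)$-separated set $D_m\subseteq M$; then $\bigcup_m D_m$ is dense, so $\dens(M)=\sup_m\abs{D_m}$ once $M$ is infinite (if $M$ is finite then (i) fails outright, as $\dens(M)<\aleph_0\leq\mathfrak s$). If every $\abs{D_m}$ were $<\mathfrak s$ then, since $\operatorname{cf}(\mathfrak s)>\omega$, we would get $\dens(M)<\mathfrak s$, against (i); hence $\abs{D_{m_0}}\geq\mathfrak s$ for some $m_0$, and any subset of $D_{m_0}$ of cardinality $\mathfrak s$ is uniformly discrete.

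\emph{(ii)$\Rightarrow$(iii).} Let $D\subseteq M$ be $\varepsilon$-separated with $\abs D=\mathfrak s$. At most one point of $D$ lies in $B(0,\varepsilon/2)$, so after deleting it I may assume $d(x,0)\geq\varepsilon/2$ for all $x\in D$, and I enumerate $D=\{x_\alpha:\alpha<\mathfrak s\}$. Fix a splitting family $\{S_\alpha:\alpha<\mathfrak s\}$ in $[\mathbb N]^\omega$. For each $n$, the function on $\{0\}\cup D$ sending $0\mapsto 0$ and $x_\alpha\mapsto\tfrac\varepsilon4\mathbbm{1}_{S_\alpha}(n)$ is $1$-Lipschitz (its values differ by at most $\varepsilon/4$, while nonzero distances in $\{0\}\cup D$ are at least $\varepsilon/2$), so its McShane extension is an $f_n\in B_{\Lip(M)}$ with $f_n(0)=0$ and $f_n(x_\alpha)=\tfrac\varepsilon4\mathbbm{1}_{S_\alpha}(n)$. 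Since $\operatorname{span}\delta(M)$ is dense in $\lipfree{M}$ and $(f_n)$ is bounded, a subsequence $(f_n)_{n\in A}$ ($A\in[\mathbb N]^\omega$) converges in $(\lipfree{M}^*,w^*)=(\Lip(M),w^*)$ iff it converges pointwise on $M$; but for any $A$ the splitting property yields $\alpha$ with $A\cap S_\alpha$ and $A\setminus S_\alpha$ both infinite, so $(f_n(x_\alpha))_{n\in A}$ takes the values $0$ and $\varepsilon/4$ infinitely often and does not converge. Hence no subsequence of $(f_n)$ is $w^*$-convergent, i.e. (iii).

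\emph{(iii)$\Rightarrow$(i), by contraposition.} Suppose $\dens(M)=\kappa<\mathfrak s$ and fix a dense $D\subseteq M$ with $\abs D=\kappa$. The crux is that any family of fewer than $\mathfrak s$ bounded real sequences has a common convergent subsequence: given bounded $(a^\alpha_n)_n$ for $\alpha<\kappa$, the sets $T_{\alpha,q}=\{n:a^\alpha_n\leq q\}$ ($\alpha<\kappa$, $q\in\mathbb Q$) number fewer than $\mathfrak s$, so some $A\in[\mathbb N]^\omega$ is split by none of them, which forces $\liminf_{n\in A}a^\alpha_n=\limsup_{n\in A}a^\alpha_n$ for every $\alpha$. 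Now take any $(f_k)\subseteq B_{\Lip(M)}$; the sequences $(f_k(y))_k$, $y\in D$, are bounded by $d(y,0)$, so there is $A\in[\mathbb N]^\omega$ along which $f_k(y)$ converges for every $y\in D$. The limit is $1$-Lipschitz on $D$; its McShane extension $\bar f$ lies in $\Lip(M)$ with $\bar f(0)=0$, and uniform equicontinuity of $(f_k)$ together with density of $D$ gives $f_k\to\bar f$ pointwise on $M$ along $A$, hence $w^*$ in $\lipfree{M}^*$. Thus $B_{\Lip(M)}$ is $w^*$-sequentially compact, closing the cycle. I expect the main obstacle to be not any individual step but keeping the two complementary uses of $\mathfrak s$ aligned — the splitting family in (ii)$\Rightarrow$(iii) versus ``small families do not split'' in (iii)$\Rightarrow$(i) — and recognizing that $\operatorname{cf}(\mathfrak s)>\omega$ is precisely what makes (ii) equivalent to the others (without it a metric sum of ever-larger bounded uniformly discrete pieces would satisfy (i) and (iii) but not (ii)).
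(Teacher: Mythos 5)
Your proposal is correct and takes essentially the same approach as the paper: the same splitting-family construction of equi-Lipschitz functions on a uniformly discrete set for (ii)$\Rightarrow$(iii), the same combinatorics with the sets $\set{n : f_n(y)>q}$ over a dense set for the link between (iii) and (i) (you state it contrapositively, as ``families of size $<\mathfrak s$ do not split''), and the uncountable cofinality of $\mathfrak s$ to pass between (i) and (ii). The only differences are cosmetic: you prove $\operatorname{cf}(\mathfrak s)>\omega$ yourself where the paper cites it, and you use McShane extensions where the paper writes explicit truncated-cone/max formulas.
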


Before we prove this theorem, let us state two immediate consequences. First, it is clear that condition (i) is equivalent to nonseparability precisely when $\mathfrak{s}=\aleph_1$, therefore:

\begin{coro}\label{weakstarseq_lip}
There exist nonseparable Lipschitz-free spaces $\lipfree{M}$ such that $B_{\Lip(M)}$ is $w^*$-sequentially compact if and only if $\mathfrak{s}\neq\aleph_1$.

In particular, under the Continuum Hypothesis, $B_{\Lip(M)}$ is $w^*$-sequentially compact if and only if $\lipfree{M}$ is separable.
\end{coro}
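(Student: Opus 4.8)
Corollary \ref{weakstarseq_lip} is immediate from Theorem \ref{weakstarseq}: since $\aleph_1\leq\mathfrak s$ always, condition~(i) coincides with nonseparability exactly when $\mathfrak s=\aleph_1$, so a nonseparable $\Free(M)$ with $w^*$-sequentially compact dual ball exists if and only if there is a complete metric space $M$ with $\aleph_1\leq\dens(M)<\mathfrak s$ (for instance $\aleph_1$ equidistant points), which happens if and only if $\mathfrak s\neq\aleph_1$; and under CH one has $\aleph_1\leq\mathfrak s\leq\mathfrak c=\aleph_1$. So the content is Theorem \ref{weakstarseq}, which the plan proves through the cycle (i)$\Rightarrow$(ii)$\Rightarrow$(iii)$\Rightarrow$(i) (then (ii)$\Rightarrow$(i) is free, since a uniformly discrete set of cardinality $\mathfrak s$ injects into every dense subset of $M$). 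The observation driving the analytic parts is that, since $\Free(M)=\overline{\text{span}}\,\delta(M)$, a bounded sequence $(f_n)$ in $B_{\Lip(M)}$ converges $w^*$ if and only if $(f_n(x))_n$ converges for every $x\in M$, and a pointwise limit of $1$-Lipschitz functions vanishing at $0$ is again such a function; hence $B_{\Lip(M)}$ is $w^*$-sequentially compact if and only if every sequence of $1$-Lipschitz functions $M\to\R$ vanishing at $0$ has a pointwise convergent subsequence.

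For (i)$\Rightarrow$(ii) I would, for each $n$, take (Zorn) a maximal $\tfrac1n$-separated set $D_n\subseteq M$; maximality makes $\bigcup_n D_n$ dense, so $\dens(M)\leq\aleph_0\cdot\sup_n|D_n|=\sup_n|D_n|$, whence $\dens(M)\geq\mathfrak s$ forces $\sup_n|D_n|\geq\mathfrak s$. Since $\mathrm{cf}(\mathfrak s)>\omega$ in ZFC (one can also see this directly: write a splitting family of size $\mathfrak s$ as a countable increasing union of pieces of size $<\mathfrak s$, recursively pick infinite $A_0\supseteq^* A_1\supseteq^*\cdots$ with $A_n$ unsplit by the $n$-th piece, and pseudo-intersect them, contradicting that the whole family splits), some $|D_n|\geq\mathfrak s$, and any subset of $D_n$ of cardinality $\mathfrak s$ is uniformly discrete.

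For (ii)$\Rightarrow$(iii), fix a uniformly discrete $\{x_\alpha:\alpha<\mathfrak s\}\subseteq M$; discarding at most one point and shrinking the separation constant, we may assume $\{0\}\cup\{x_\alpha:\alpha<\mathfrak s\}$ is $\varepsilon$-separated for some $\varepsilon>0$. Using the definition of $\mathfrak s$, fix a splitting family $\{A_\alpha:\alpha<\mathfrak s\}\subseteq[\mathbb N]^\omega$. On $\{0\}\cup\{x_\alpha\}$ define $g_n(0)=0$, $g_n(x_\alpha)=\mathbbm{1}_{A_\alpha}(n)$; since $g_n$ has values in $[0,1]$ and all distinct points of this set are at distance $\geq\varepsilon$, $g_n$ is $(1/\varepsilon)$-Lipschitz. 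Extend it by the McShane formula to a $(1/\varepsilon)$-Lipschitz function on $M$ vanishing at $0$ and put $f_n=\varepsilon\widetilde{g_n}\in B_{\Lip(M)}$. If $(f_n)_{n\in B}$ converged $w^*$ for some infinite $B\subseteq\mathbb N$, then $(\varepsilon\mathbbm{1}_{A_\alpha}(n))_{n\in B}$ would be eventually constant for every $\alpha$, i.e. no $A_\alpha$ would split $B$, contradicting the choice of $\{A_\alpha\}$.

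Finally, for (iii)$\Rightarrow$(i) I argue the contrapositive. Assume $\dens(M)=\kappa<\mathfrak s$ and fix a dense $N=\{y_\xi:\xi<\kappa\}$ with $y_0=0$. Given $(f_n)$ in $B_{\Lip(M)}$, set $E_{\xi,p}=\{n:f_n(y_\xi)<p\}$ for $\xi<\kappa$, $p\in\mathbb Q$. The family $\{E_{\xi,p}\}$ has cardinality $\leq\kappa<\mathfrak s$, hence is not splitting, so we may choose an infinite $B\subseteq\mathbb N$ with $B\cap E$ finite or $B\setminus E$ finite for each member $E$. For each $\xi$, $(f_n(y_\xi))_{n\in B}$ is bounded by $d(y_\xi,0)$, and if it failed to converge there would be rationals $p<q$ with infinitely many $n\in B$ where $f_n(y_\xi)<p$ and infinitely many where $f_n(y_\xi)>q$, so that $E_{\xi,q}$ splits $B$ --- impossible. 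Thus $f(y_\xi):=\lim_{n\in B}f_n(y_\xi)$ exists for every $\xi$; $f$ is $1$-Lipschitz on $N$ and vanishes at $0$, hence extends to some $\bar f\in B_{\Lip(M)}$, and using density together with the equicontinuity of the $f_n$ one checks $f_n(x)\to\bar f(x)$ for all $x\in M$ along $B$. So $(f_n)_{n\in B}$ converges $w^*$ and $B_{\Lip(M)}$ is $w^*$-sequentially compact. I expect the subtlest step to be this last one --- seeing that, once the problem is reduced by density and equicontinuity to sequential compactness of the product $\prod_{y\in N}[-d(y,0),d(y,0)]$, the convergent subsequence can be produced using nothing more than the non-splitting of families of size $<\mathfrak s$ --- together with the slightly tangential appeal to $\mathrm{cf}(\mathfrak s)>\omega$ needed to align (i) with (ii).
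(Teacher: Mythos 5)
Your proposal is correct and follows essentially the same route as the paper: the corollary is read off from Theorem \ref{weakstarseq} exactly as in the text, and your proof of that theorem mirrors the paper's, using maximal $\frac{1}{n}$-separated sets together with $\mathrm{cf}(\mathfrak{s})>\omega$ for (i)$\Leftrightarrow$(ii), indicator-type functions on a uniformly discrete set indexed by a splitting family for (ii)$\Rightarrow$(iii) (your McShane extension replaces the paper's explicit functions $\max\{r-d(\cdot,x_\gamma),0\}$), and the family $\{n: f_n(x)>q\}$ with the two-cluster-point argument for (iii)$\Rightarrow$(i), which you state contrapositively while the paper argues directly.
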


Second, since $\lipfree{M}$ is isomorphic to $\ell_1(\abs{M})$ when $M$ is infinite and endowed with a metric in which any pair of different points are at distance $1$, we deduce as a particular case:

\begin{coro}
$B_{\ell_\infty(\Gamma)}$ is $w^*$-sequentially compact if and only if $\abs{\Gamma}<\mathfrak{s}$.
Therefore it is undecidable in ZFC whether $B_{\ell_\infty(\omega_1)}$ is w$^*$-sequentially compact.
\end{coro}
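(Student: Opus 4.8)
The plan is to derive this corollary directly from Theorem \ref{weakstarseq}, using the metric realization of $\ell_\infty(\Gamma)$ that is flagged in the sentence immediately preceding the statement. Concretely, I would fix the pointed metric space $M=\set{0}\cup\Gamma$ equipped with the $\set{0,1}$-metric $d(x,y)=1$ for all $x\neq y$. Since $M$ is uniformly discrete it is complete, so Theorem \ref{weakstarseq} applies to it. The next task is to identify $\Lip(M)$ with $\ell_\infty(\Gamma)$: the map $f\mapsto(f(\gamma))_{\gamma\in\Gamma}$ is a linear isomorphism because $f(0)=0$ and all distances equal $1$, whence $\norm{f}_\infty\leq\norm{f}_{\mathrm{Lip}}\leq 2\norm{f}_\infty$. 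This map is the adjoint of the natural isomorphism $\lipfree{M}\cong\ell_1(\Gamma)$ (the $\ell_1$-sum decomposition recalled before the statement), so it is a $w^*$-$w^*$ homeomorphism; explicitly, $w^*$-convergence on either side is coordinatewise convergence over $\Gamma$.

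With this in hand I would record two easy facts. First, because $M$ is uniformly discrete every point is isolated, so the only dense subset of $M$ is $M$ itself and $\dens(M)=\abs{M}=\abs{\Gamma}$ when $\Gamma$ is infinite. Second, $w^*$-sequential compactness of a dual ball is an isomorphic invariant of the predual, since a linear isomorphism of preduals carries bounded $w^*$-convergent sequences to bounded $w^*$-convergent sequences and conversely; hence $B_{\Lip(M)}$ is $w^*$-sequentially compact if and only if $B_{\ell_\infty(\Gamma)}$ is. Now Theorem \ref{weakstarseq} in the form (i)$\Leftrightarrow$(iii) gives that $B_{\ell_\infty(\Gamma)}$ fails to be $w^*$-sequentially compact exactly when $\dens(M)\geq\mathfrak{s}$, i.e.\ when $\abs{\Gamma}\geq\mathfrak{s}$; negating yields precisely that $B_{\ell_\infty(\Gamma)}$ is $w^*$-sequentially compact if and only if $\abs{\Gamma}<\mathfrak{s}$. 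The finite case is trivial and consistent with this: then $\ell_\infty(\Gamma)$ is finite-dimensional, so $B_{\ell_\infty(\Gamma)}$ is norm-compact and metrizable (hence $w^*$-sequentially compact), while $\abs{\Gamma}<\aleph_1\leq\mathfrak{s}$.

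For the final undecidability assertion I would take $\Gamma=\omega_1$, so $\abs{\Gamma}=\aleph_1$ and the criterion reads: $B_{\ell_\infty(\omega_1)}$ is $w^*$-sequentially compact if and only if $\aleph_1<\mathfrak{s}$. As recalled just before Theorem \ref{weakstarseq}, one has $\aleph_1\leq\mathfrak{s}\leq\mathfrak{c}$ and both $\aleph_1=\mathfrak{s}$ (e.g.\ under the Continuum Hypothesis) and $\aleph_1<\mathfrak{s}$ are consistent with ZFC; thus the statement ``$\aleph_1<\mathfrak{s}$'', and therefore the $w^*$-sequential compactness of $B_{\ell_\infty(\omega_1)}$, is independent of ZFC. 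There is essentially no hard step here, as all the content sits in Theorem \ref{weakstarseq}; the only point demanding genuine attention is checking that the isomorphism $\Lip(M)\cong\ell_\infty(\Gamma)$ respects the $w^*$-structures well enough for $w^*$-sequential compactness of the ball to transfer, which is exactly the isomorphic-invariance observation above.
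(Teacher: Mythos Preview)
Your proposal is correct and follows exactly the approach indicated in the paper: the paper derives the corollary as an immediate particular case of Theorem~\ref{weakstarseq} by taking $M$ to be an infinite set with the $\set{0,1}$-metric so that $\lipfree{M}\cong\ell_1(\abs{M})$, which is precisely what you do (with the added care of verifying that the identification $\Lip(M)\cong\ell_\infty(\Gamma)$ is $w^*$-$w^*$ compatible and handling the finite case).
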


For the proof of Theorem \ref{weakstarseq} we will use the following easy observation.

\begin{lema}\label{l:bigSeparatedSet}
Let $\kappa$ be a cardinal of uncountable cofinality. Then $\dens(M)\geq \kappa$ if and only if $M$ contains a uniformly discrete subset of cardinality $\kappa$.
\end{lema}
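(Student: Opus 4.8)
The plan is to prove Lemma \ref{l:bigSeparatedSet} by a Zorn's lemma / exhaustion argument, extracting a suitable uniformly discrete subset from a dense set, and crucially using the uncountable cofinality of $\kappa$ to control cardinalities. The ``if'' direction is trivial: a uniformly discrete set of cardinality $\kappa$ cannot be covered by fewer than $\kappa$ balls of small radius, so it has no dense subset of size $<\kappa$, forcing $\dens(M)\geq\kappa$. (More simply: any set of size $\kappa$ has density $\leq \kappa$, and a uniformly discrete set of size $\kappa$ trivially has density exactly $\kappa$, and it sits inside $M$.) So the content is in the ``only if'' direction.

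First I would fix a dense subset $D\subseteq M$ with $\abs{D}=\dens(M)\geq\kappa$. For each $n\in\mathbb{N}$, I would use Zorn's lemma to pick a maximal $\tfrac1n$-separated subset $S_n\subseteq D$, i.e.\ a maximal subset with pairwise distances $\geq\tfrac1n$. By maximality, the balls $B(x,\tfrac1n)$ with $x\in S_n$ cover $D$, hence $D\subseteq\bigcup_{x\in S_n}B(x,\tfrac1n)$, so $\abs{D}\leq\abs{S_n}\cdot\aleph_0$, whence $\abs{S_n}=\abs{D}\geq\kappa$ for every $n$ (using that $\abs{D}$ is infinite). Moreover $D=\bigcup_n S_n'$ where $S_n'$ is... actually more directly: since $D$ has cardinality $\abs{D}$ and $D\subseteq\bigcup_n\bigcup_{x\in S_n}B(x,\tfrac1n)$, this does not immediately help; the real point is just that \emph{some} $S_n$ is large, and in fact each one is.

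Now fix any $n$ with $\abs{S_n}\geq\kappa$ — we just showed all of them work — and consider $S_n$ as a metric space, $\tfrac1n$-uniformly discrete, of cardinality $\abs{D}$. If $\abs{D}=\kappa$ we are done by passing to a subset of size $\kappa$. If $\abs{D}>\kappa$ there is nothing more to prove since $S_n$ already contains a uniformly discrete subset of cardinality $\kappa$ (any subset of the right size). Hence in all cases $M$ contains a uniformly discrete subset of cardinality $\kappa$.

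I expect essentially no obstacle here: the only subtlety is making sure cardinal arithmetic goes through, and this is where uncountable cofinality of $\kappa$ enters — if $\kappa$ had countable cofinality, a maximal $\tfrac1n$-separated set could in principle have cardinality $<\kappa$ for each $n$ while their union has cardinality $\kappa$, breaking the argument (indeed $\ell_\infty$-type counterexamples at $\kappa=\aleph_\omega$ show the statement genuinely fails there). So the proof should explicitly invoke: for each $n$, $\dens(M)=\abs{D}\leq\aleph_0\cdot\abs{S_n}=\abs{S_n}$, hence $\abs{S_n}\geq\kappa$, and then extract. The hyperbole-free summary: the main ``work'' is the one-line maximality/covering observation, and the role of the cofinality hypothesis is simply that it is not actually needed for this direction once we observe each individual $S_n$ is already large — the hypothesis would matter if we only knew $D=\bigcup_n S_n$ with no control on individual pieces, but here maximality gives uniform control. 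I would still keep the hypothesis in the statement since it is used in the intended application and makes the ``if and only if'' clean.
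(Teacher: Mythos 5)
The ``if'' direction is fine, but the ``only if'' direction contains a genuine error. The step $\abs{D}\leq\abs{S_n}\cdot\aleph_0$ does not follow from the covering $D\subseteq\bigcup_{x\in S_n}B(x,\tfrac1n)$: each ball $B(x,\tfrac1n)$ may contain uncountably many points of $D$, so covering $D$ by $\abs{S_n}$ balls gives no upper bound on $\abs{D}$ in terms of $\abs{S_n}$ alone. Consequently your conclusion that \emph{every} maximal $\tfrac1n$-separated set has cardinality at least $\kappa$ is false, and so is your closing remark that the uncountable cofinality of $\kappa$ is not actually needed for this direction. The paper itself records the counterexample immediately after the lemma: if $\kappa=\sup_n\kappa_n$ with $\kappa_n<\kappa$ for all $n$, let $M$ be a separated union of spaces $M_n$ of cardinality $\kappa_n$ in which all non-zero distances equal $\tfrac1n$; then $\dens(M)=\kappa$, yet a maximal $\tfrac1m$-separated set meets each $M_n$ with $n>m$ in at most one point, and $M$ contains no uniformly discrete subset of cardinality $\kappa$ at all. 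For a failure of your specific inequality one can even take $M$ to be the metric sum of $\aleph_1$ copies of $[0,\tfrac14]$: then $\dens(M)=\aleph_1$ while every maximal $1$-separated subset is a singleton.

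The correct argument --- the one the paper gives --- does not try to bound $\abs{D}$ by a single $\abs{S_n}$. Instead, it observes that by maximality $\bigcup_n S_n$ is dense in $M$, hence $\sum_n\abs{S_n}\geq\dens(M)\geq\kappa$, and it is precisely here that the uncountable cofinality of $\kappa$ is invoked: a countable sum of cardinals each strictly below $\kappa$ cannot reach $\kappa$, so some single $S_n$ must have cardinality at least $\kappa$. In other words, the cofinality hypothesis is exactly what replaces the uniform control you claimed to get from maximality; it is not a removable convenience but the crux of the forward implication.
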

\begin{proof}
Assume $\dens(M)\geq \kappa$. For each $n\in\mathbb N$, let $A_n$ be a maximal $\frac{1}{n}$-separated subset of $M$. Then $\bigcup_{n\in\mathbb N}A_n$ is dense in $M$, hence it has cardinality at least $\kappa$. 
Since $\kappa$ has uncountable cofinality, this implies that $\abs{A_n}\geq\kappa$ for some $n$.

For the converse, let $A\subset M$ be an $\varepsilon$-separated set of cardinality $\kappa$ and let $D$ be a dense set in $M$. For every $x\in A$ fix some $\psi(x) \in D \cap B(x,\varepsilon/3)$. Then $\psi$ is injective and $\abs{D}\geq\abs{A}=\kappa$.
\end{proof}

Note that the direct implication of Lemma \ref{l:bigSeparatedSet} fails if $\kappa > \aleph_0$ has countable cofinality: suppose that $\kappa=\sup_n\kappa_n$ with $\kappa>\kappa_n$ for all $n$, let $M_n$ be a metric space of cardinality $\kappa_n$ where all non-zero distances are $\frac{1}{n}$, and construct $M$ as a separated union of the spaces $M_n$. Then $\dens(M)=\kappa$ but $M$ contains no uniformly discrete subset of cardinality $\kappa$.

\begin{proof}[Proof of Theorem \ref{weakstarseq}]
(i)$\Leftrightarrow$(ii): 
It is easy to prove that the cofinality of $\mathfrak{s}$ is not countable (see e.g. \cite[Proposition 1.1]{DowShelah}), so this follows from Lemma \ref{l:bigSeparatedSet}.

(ii)$\Rightarrow$(iii): Fix a set $\set{0}\cup\set{x_\gamma : \gamma<\mathfrak{s}}\subset M$ such that any pair of points are at distance at least $r>0$, and define functions $f_\gamma\in B_{\Lip(M)}$ by
$$
f_\gamma(x) = \max\set{r-d(x,x_\gamma),0}
$$
for $x\in M$, so that $f_\gamma(x_\gamma)=r$ and $f_\gamma(x_\lambda)=0$ for $\lambda<\mathfrak{s}$, $\lambda\neq\gamma$. Now fix a splitting family $\mathscr{S}=\set{S_\gamma:\gamma<\mathfrak{s}}$ in $[\mathbb N]^\omega$ and define a sequence $(g_n)_n$ in $B_{\Lip(M)}$ by $g_n = \max\set{f_\gamma: n\in S_\gamma}$ so that
$$
g_n(x_\gamma) = \begin{cases}
r &\text{if $n\in S_\gamma$} \\
0 &\text{if $n\notin S_\gamma$}
\end{cases} .
$$
Let us see that no subsequence $(g_{n_k})_{k\in\mathbb N}$ of $(g_n)$ converges weak$^*$, i.e. pointwise. Let $A=\set{n_k:k\in\mathbb N}\in[\mathbb N]^\omega$, then there exists $\gamma<\mathfrak{s}$ such that $A\cap S_\gamma$ and $A\setminus S_\gamma$ are both infinite, thus $g_{n_k}(x_\gamma)$ takes both values $0$ and $r$ for infinitely many values of $k$ so it cannot converge. We conclude that $B_{\Lip(M)}$ is not $w^*$-sequentially compact.

(iii)$\Rightarrow$(i): We may assume that $M$ is infinite, as the implication is trivial otherwise. Suppose that there is a sequence $(f_n)_n$ in $B_{\Lip(M)}$ with no pointwise convergent subsequence. Let $D$ be a dense subset of $M$ and $\mathscr{S}$ be the family of sets $S_{x,q}\subset\mathbb N$, for $x\in D$ and $q\in\mathbb Q$, given by
$$
S_{x,q} = \set{n\in\mathbb N : f_n(x)>q} .
$$
Fix an arbitrary $A\in[\mathbb N]^\omega$ and order it as $A=\{n_k:k\in\mathbb N\}$ with $n_k<n_{k+1}$ for all $k$.
Since $(f_{n_k})_k$ is equi-Lipschitz, there is $x \in D$ such that the sequence $(f_{n_k}(x))_k$ does not converge (here we use the easily provable and well known fact that if a sequence of equi-Lipschitz functions converges pointwise on a dense set, then it converges pointwise everywhere).
Because $f_{n_k}(x)$ is bounded by $d(x,0)$, there must be two different cluster points $a<b$. If $q \in \mathbb Q$ is such that $a<q<b$ then the set $S_{x,q}$ splits the set $A$. This shows that $\mathscr{S}$ contains a splitting family in $[\mathbb N]^\omega$ and therefore $\abs{D}= \abs{D\times\mathbb Q}\geq\abs{\mathscr{S}}\geq\mathfrak{s}$.
\end{proof}

\subsection{Weak\texorpdfstring{$^*$}{*} separability of the dual}

In \cite{Talponen}, Talponen introduced the following property: a Banach space $X$ has the \textit{Countable Separation Property} (CSP) if any set $A\subset X^*$ that separates points of $X$ has a countable subset $B\subset A$ that also separates points of $X$. It is obvious that separable Banach spaces have the CSP. 

\begin{prop}\label{csp}
A Lipschitz-free space has the CSP if and only if it is separable.
\end{prop}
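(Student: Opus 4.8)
The plan is to establish the nontrivial implication, that the CSP forces separability; the converse is immediate, since if $X$ is separable then $(B_{X^*},w^*)$ is compact metrizable, hence second countable, so any separating family $A\subseteq X^*$, after normalizing it to lie in $S_{X^*}$, contains a countable $w^*$-dense subfamily, which is again separating by metrizability. For the forward direction I would argue by contraposition: assuming $\lipfree{M}$, equivalently $M$, is nonseparable, I will produce a separating family in $\Lip(M)=\lipfree{M}^{*}$ no countable subfamily of which separates points of $\lipfree{M}$.

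First I would extract the combinatorial skeleton. Since $M=\{0\}\cup\bigcup_n(M\setminus B(0,1/n))$, some $M\setminus B(0,1/n)$ is nonseparable, hence contains an uncountable uniformly discrete set; trimming it to size $\aleph_1$ and adjoining $0$ produces a uniformly discrete set $D=\{0\}\cup\{x_\gamma:\gamma<\omega_1\}\subseteq M$ with the $x_\gamma$ pairwise distinct and nonzero. Every subset of $D$ is uniformly discrete, hence closed in $M$, so for each $\alpha<\omega_1$ the ``tail'' $T_\alpha:=\{0\}\cup\{x_\gamma:\alpha\leq\gamma<\omega_1\}$ is a closed subset of $M$ containing $0$, and $\lipfree{T_\alpha}=\overline{\text{span}}\,\delta(T_\alpha)$ sits as a closed subspace of $\lipfree{M}$. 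The family $(T_\alpha)_{\alpha<\omega_1}$ is $\subseteq$-decreasing, with $\bigcap_{\alpha<\omega_1}T_\alpha=\{0\}$, while $\bigcap_{\alpha\in J}T_\alpha\supseteq T_{\sup J}$ for every countable $J\subseteq\omega_1$ (note $\sup J<\omega_1$).

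Next I would let $A\subseteq\Lip(M)$ be the set of all functions that vanish identically on $T_\alpha$ for some $\alpha<\omega_1$. Since $0\in T_\alpha$, the functions vanishing on $T_\alpha$ form precisely the annihilator $\lipfree{T_\alpha}^{\perp}$, so by the bipolar theorem the set of $\mu\in\lipfree{M}$ annihilated by all of them is exactly $\lipfree{T_\alpha}$; hence the set of $\mu$ annihilated by every member of $A$ is $\bigcap_{\alpha<\omega_1}\lipfree{T_\alpha}$. The crucial step is to identify this intersection: by the intersection property of Lipschitz-free spaces over closed subsets proved in \cite{AP}, it equals $\lipfree{\bigcap_{\alpha<\omega_1}T_\alpha}=\lipfree{\{0\}}=\{0\}$, so $A$ separates points of $\lipfree{M}$. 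On the other hand, any countable $B\subseteq A$ consists of functions vanishing on $T_\alpha$ for $\alpha$ in some countable $J\subseteq\omega_1$, so every $\mu$ annihilated by $B$ lies in
\[
\bigcap_{\alpha\in J}\lipfree{T_\alpha}\ \supseteq\ \lipfree{T_{\sup J}}\ \ni\ \delta(x_{\sup J})\neq 0 ,
\]
using only the trivial inclusion $\lipfree{N}\subseteq\lipfree{N'}$ for $N\subseteq N'$; so no countable subfamily of $A$ separates points, and $\lipfree{M}$ fails the CSP.

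The step I expect to be the main obstacle is exactly the identity $\bigcap_{\alpha<\omega_1}\lipfree{T_\alpha}=\{0\}$: a priori this transfinite decreasing intersection of subspaces might contain a nonzero element approximable by finitely supported elements living on arbitrarily high tails, and ruling that out is precisely the nonelementary input we take from \cite{AP}. One could alternatively note that $\lipfree{D}$ is isomorphic, via the natural coordinate map, to an $\ell_1(\omega_1)$-space under which each $\lipfree{T_\alpha}$ corresponds to $\ell_1(\{\gamma:\gamma\geq\alpha\})$, so that the intersection is visibly $\{0\}$ (every element of $\ell_1(\omega_1)$ has bounded support); but invoking \cite{AP} is cleaner.
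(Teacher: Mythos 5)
Your argument is correct, and its skeleton coincides with the paper's: both proofs take a transfinite decreasing chain of tails of a separated uncountable family (the paper uses separated closed balls $B_\gamma$ where you use points $x_\gamma$ — an immaterial difference), observe that the chain of subspaces $\lipfree{T_\alpha}$ is decreasing with intersection $\lipfree{\{0\}}=\{0\}$ by the intersection theorem for Lipschitz-free spaces over closed subsets (the paper cites \cite[Theorem 2.1]{APPP} for this), and conclude that the CSP fails. The one genuine difference is the last step: the paper outsources it to Talponen's result \cite[Theorem 4.1]{Talponen}, which says that a Banach space with a strictly decreasing $\omega_1$-chain of closed subspaces with trivial intersection fails the CSP, whereas you unpack that implication into an explicit annihilator/bipolar argument (the separating family is $\bigcup_\alpha\lipfree{T_\alpha}^\perp$, and a countable subfamily annihilates $\delta(x_{\sup J})\neq 0$). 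This makes your proof self-contained modulo the intersection theorem, at the cost of a little duplication; both routes rely on exactly the same nonelementary input from \cite{AP}/\cite{APPP}. Your proof of the easy direction and the reduction to a uniformly discrete set bounded away from $0$ are fine. One caveat on your closing ``alternative'': the natural coordinate map identifies $\lipfree{D}$ with $\ell_1(\omega_1)$ up to isomorphism only when $D$ is bounded as well as uniformly discrete, so you would first need to trim $D$ to a bounded uncountable piece (always possible, since $D=\bigcup_n (D\cap B(0,n))$); as written that remark has a small gap, but since you explicitly prefer the route through \cite{AP}, it does not affect the proof.
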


\begin{proof}
Let $M$ be a complete pointed metric space. Suppose that $M$ is not separable and let $(B_\gamma)_{\gamma<\omega_1}$ be a family of closed balls in $M$ with $\inf_{\gamma\neq\gamma'} d(B_\gamma,B_{\gamma'})>0$. Define
$$
A_\gamma:=\{0\}\cup\bigcup\set{B_\lambda:\gamma\leq\lambda<\omega_1} .
$$
Then $(A_\gamma)_{\gamma<\omega_1}$ is a strictly decreasing family of closed subsets of $M$ whose intersection is $\{0\}$. Therefore $(\lipfree{A_\gamma})_{\gamma<\omega_1}$ is a strictly decreasing family of closed subspaces of $\lipfree{M}$ whose intersection is
$$
\bigcap_{\gamma<\omega_1} \lipfree{A_\gamma} = \mathcal{F}\left(\bigcap_{\gamma<\omega_1} A_\gamma\right) = \lipfree{\{0\}} = \{0\}
$$
where the first equality follows from \cite[Theorem 2.1]{APPP}. By \cite[Theorem 4.1]{Talponen}, this implies that $\lipfree{M}$ fails the CSP.
\end{proof}

Any Banach space with the CSP clearly has weak$^*$ separable dual ball. In contrast to Proposition \ref{csp}, the $w^*$-separability of $B_{\Lip(M)}$ does not imply the separability of $\lipfree{M}$. In fact, $B_{\Lip(\ell_\infty)}$ is $w^*$-separable whereas $\Free(\ell_\infty)$ is nonseparable. More precisely, we have:

\begin{prop}\label{p:weak-star-separable-dual-ball}
Let $M$ be a pointed metric space. Then the following are equivalent:
\begin{enumerate}
    \item[(i)] $M$ isometrically embeds into $\ell_\infty$;
    \item[(ii)] $B_{\Lip(M)}$ is $w^*$-separable;
    \item[(iii)] $\lipfree{M}$ is isometric to a subspace of $\ell_\infty$.
\end{enumerate}
\end{prop}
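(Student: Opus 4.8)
I would prove the cycle $(i)\Rightarrow(ii)\Rightarrow(iii)\Rightarrow(i)$. Two of these arrows are routine. For $(iii)\Rightarrow(i)$, note that $\delta\colon M\to\lipfree{M}$ is an isometry sending $0$ to $0$, so composing it with an isometric embedding $\lipfree{M}\hookrightarrow\ell_\infty$ yields an isometric embedding of $M$ into $\ell_\infty$. For $(ii)\Rightarrow(iii)$, I would use the easy half of the standard fact that a Banach space with $w^*$-separable dual ball embeds isometrically into $\ell_\infty$: if $\set{f_n:n\in\mathbb N}$ is $w^*$-dense in $B_{\Lip(M)}=B_{\lipfree{M}^*}$, then $\mu\mapsto(\langle\mu,f_n\rangle)_n$ maps $\lipfree{M}$ into $\ell_\infty$, and it is isometric because $\mu\mapsto\langle\mu,f\rangle$ is $w^*$-continuous, so $\sup_n\abs{\langle\mu,f_n\rangle}=\sup\set{\abs{\langle\mu,f\rangle}:f\in B_{\Lip(M)}}=\norm\mu$. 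Everything therefore reduces to $(i)\Rightarrow(ii)$.

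For $(i)\Rightarrow(ii)$, I would first restate the hypothesis: an isometric embedding of $M$ into $\ell_\infty$ (which we may translate so that $0\mapsto0$) is precisely a countable family $\set{f_n:n\in\mathbb N}\subset B_{\Lip(M)}$ with $d(x,y)=\sup_n\abs{f_n(x)-f_n(y)}$ for all $x,y\in M$. Recall also that on the bounded set $B_{\Lip(M)}$ the weak$^*$ topology coincides with pointwise convergence on $M$. The goal is then to produce an explicit countable $w^*$-dense subset $\mathcal D$ of $B_{\Lip(M)}$; I would take $\mathcal D$ to consist of all functions of the form
$$
g(x)=\min_{1\leq i\leq m}\Big(q_i+\max_{n\in S}\abs{f_n(x)-r_{i,n}}\Big),
$$
where $m\in\mathbb N$, $S\subset\mathbb N$ is finite, and $q_i,r_{i,n}\in\mathbb Q$, subject to the constraint $g(0)=0$. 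Every such $g$ is $1$-Lipschitz, since lattice operations and rational translates do not increase the Lipschitz constant, and $g(0)=0$ by fiat, so $\mathcal D\subset B_{\Lip(M)}$; and $\mathcal D$ is clearly countable.

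Density is where the work lies. Given $f\in B_{\Lip(M)}$, a finite set $F\ni0$ in $M$, and $\varepsilon>0$, the McShane extension $H(x)=\min_{y\in F}\bigl(f(y)+d(x,y)\bigr)$ lies in $B_{\Lip(M)}$ and agrees with $f$ on $F$. Since $d(y,y')=\sup_n\abs{f_n(y)-f_n(y')}$ and $F$ is finite, one can pick a single finite $S\subset\mathbb N$ so that $\rho_y(x):=\max_{n\in S}\abs{f_n(x)-f_n(y)}$ satisfies $\rho_y\leq d(\cdot,y)$ everywhere and $\rho_y(y')>d(y',y)-\varepsilon$ for all $y,y'\in F$. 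Replacing each $d(\cdot,y)$ by $\rho_y$ in the formula for $H$, and then the real numbers $f(y)$ and $f_n(y)$ by sufficiently close rationals, yields a function of the displayed form that is within $O(\varepsilon)$ of $f$ on $F$. Finally, because $f_n(0)=0$, the value at $0$ of any such combination is a rational number determined by the chosen data; subtracting it as a constant places the function in $\mathcal D$ and, since that value is itself $O(\varepsilon)$ (as $0\in F$), only perturbs the function on $F$ by $O(\varepsilon)$. Letting $\varepsilon\to0$ shows $\mathcal D$ is $w^*$-dense.

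The main obstacle is exactly this last juggling act in $(i)\Rightarrow(ii)$: the approximants must at once be assembled from finitely many $f_n$ using rational data, so that they live in one fixed countable set, and remain in $B_{\Lip(M)}$, i.e. be $1$-Lipschitz and vanish at the base point. Lattice operations together with the McShane extension are what keep us inside $B_{\Lip(M)}$, while the fact that the base-point value of an admissible combination is automatically rational (a consequence of $f_n(0)=0$) is what lets us enforce the normalization without leaving the countable family.
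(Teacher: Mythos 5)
Your proof is correct, but it closes the cycle differently from the paper on the one substantive implication. The paper handles (ii)$\Leftrightarrow$(iii) by citing Dancer--Sims (writing out (ii)$\Rightarrow$(iii) exactly as you do), gets (iii)$\Rightarrow$(i) from the isometry $\delta$ as you do, and disposes of the remaining direction as (i)$\Rightarrow$(iii) by combining $\Free(M)\subseteq\Free(\ell_\infty)$ with Kalton's theorem that $\Free(\ell_\infty)$ embeds isometrically into $\ell_\infty$ \cite[Proposition 5.1]{Kalton}. You instead prove (i)$\Rightarrow$(ii) directly, by exhibiting an explicit countable $w^*$-dense subset of $B_{\Lip(M)}$ built from rational inf-convolutions of the coordinate functions of the embedding; I checked the details (the $1$-Lipschitz estimates for the lattice expressions, the McShane step, the choice of a single finite $S$ norming all distances within the finite set $F$ up to $\varepsilon$, and the rationality of the value at $0$ forced by $f_n(0)=0$) and they all go through. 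In effect your argument is a self-contained proof of the content of Kalton's Proposition 5.1: feeding your (i)$\Rightarrow$(ii) into the easy (ii)$\Rightarrow$(iii) recovers the isometric embedding $\Free(M)\hookrightarrow\ell_\infty$ that the paper imports as a black box. What the paper's route buys is brevity; what yours buys is transparency --- the $w^*$-dense countable family is written down explicitly rather than extracted from a cited embedding --- at the cost of the ``juggling act'' you correctly identify as the crux.
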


\begin{proof}
The equivalence between (ii) and (iii) is true for any Banach space and can be found in \cite{Dancer}.
Let us show (ii)$\Rightarrow$(iii) explicitly, for the convenience of the reader. Let $(f_n)_{n=1}^\infty\subset B_{\Lip(M)}$ be $w^*$-dense in $B_{\Lip(M)}$. Then $u:\Free(M) \to \ell_\infty$ defined by $u(\mu)=(\langle\mu,f_n\rangle)_{n=1}^\infty$ when $\mu\in\Free(M)$ is easily checked to be an isometric embedding.
To see that (i)$\Rightarrow$(iii), note that $\Free(M)\subseteq \Free(\ell_\infty)$ and $\Free(\ell_\infty)$ is isometric to a subspace of $\ell_\infty$ as proved by Kalton in \cite[Proposition 5.1]{Kalton}.
Finally, since $M$ is isometric to a subset of $\lipfree{M}$, it is clear that (iii)$\Rightarrow$(i), and the proof is complete.
\end{proof}

The next proposition has essentially the same proof as the previous one.

\begin{prop}\label{p:weak-star-separable-dual-ball-for-equivalent-norm}
Let $M$ be a pointed metric space. Then the following are equivalent:
\begin{enumerate}
    \item[(i)] $M$ bi-Lipschitz embeds into $\ell_\infty$;
    \item[(ii)] there is an equivalent norm $\abs{\cdot}$ on $\Free(M)$ such that $B_{(\Lip(M),\abs{\cdot})}$ is $w^*$-separable;
    \item[(iii)] $\lipfree{M}$ is isomorphic to a subspace of $\ell_\infty$.
\end{enumerate}
\end{prop}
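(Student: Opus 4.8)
The plan is to mimic the proof of Proposition~\ref{p:weak-star-separable-dual-ball}, replacing ``isometric'' by ``isomorphic'' and ``$w^*$-separable ball'' by ``ball that is $w^*$-separable for some equivalent norm'' throughout. The cycle of implications (i)$\Rightarrow$(iii)$\Rightarrow$(ii)$\Rightarrow$(i) is the natural order.

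For (i)$\Rightarrow$(iii): if $M$ bi-Lipschitz embeds into $\ell_\infty$, then $\lipfree{M}$ is isomorphic to $\overline{\text{span}}\,\delta(N)$ for a subset $N\subseteq\ell_\infty$ (using that bi-Lipschitz equivalent metric spaces have isomorphic Lipschitz-free spaces, as recalled in the introduction), i.e. to a subspace of $\lipfree{\ell_\infty}$, which by Kalton's \cite[Proposition 5.1]{Kalton} is isometric, hence certainly isomorphic, to a subspace of $\ell_\infty$. For (iii)$\Rightarrow$(ii): if $T:\lipfree{M}\to\ell_\infty$ is an isomorphic embedding, transport the sup-norm of $\ell_\infty$ back via $T$ to get an equivalent norm $\abs{\cdot}$ on $\lipfree{M}$; then $\lipfree{M}$ with $\abs{\cdot}$ is \emph{isometric} to a subspace of $\ell_\infty$, and by the equivalence (ii)$\Leftrightarrow$(iii) of Proposition~\ref{p:weak-star-separable-dual-ball} applied to this norm (or directly by the general Banach space fact from \cite{Dancer} that a separable-in-$w^*$ dual ball is equivalent to isometric embeddability into $\ell_\infty$), the dual unit ball $B_{(\Lip(M),\abs{\cdot})}$ is $w^*$-separable. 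One should be a little careful here that the predual norm dual to $\abs{\cdot}$ really is $\abs{\cdot}$ on $\lipfree{M}$ and that $w^*$-density is computed with respect to the correct topology, but since the weak$^*$ topology only depends on the duality pairing and not on the norm, this is automatic.

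For (iii)$\Rightarrow$(i): since $M$ is bi-Lipschitz embedded in $\lipfree{M}$ via $\delta$ (an isometry, in fact), if $\lipfree{M}$ is isomorphic to a subspace of $\ell_\infty$ then composing $\delta$ with the isomorphism gives a bi-Lipschitz embedding of $M$ into $\ell_\infty$. This completes the cycle.

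I do not expect a genuine obstacle: the only subtlety is bookkeeping about which norm is the predual of which, and making sure the appeal to \cite{Dancer} (or to Proposition~\ref{p:weak-star-separable-dual-ball}) is applied to the renormed space rather than the original one. Since the statement of the proposition is literally ``essentially the same proof as the previous one,'' the write-up can be kept to a few lines referencing the earlier argument.
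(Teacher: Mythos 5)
Your individual implications are all correct and this is exactly the route the paper has in mind (it does not even write the proof out, remarking only that it is ``essentially the same'' as that of Proposition~\ref{p:weak-star-separable-dual-ball}). However, as written your argument does not establish the stated equivalence: you announce the cycle (i)$\Rightarrow$(iii)$\Rightarrow$(ii)$\Rightarrow$(i), but the three implications you actually prove are (i)$\Rightarrow$(iii), (iii)$\Rightarrow$(ii) and (iii)$\Rightarrow$(i) --- your last paragraph uses the hypothesis ``$\lipfree{M}$ is isomorphic to a subspace of $\ell_\infty$'', which is (iii), not (ii). Consequently no implication leaves (ii), and a space satisfying (ii) has not been shown to satisfy (i) or (iii).

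The repair is immediate with the tools you already invoke: for (ii)$\Rightarrow$(iii), take a sequence $(f_n)$ that is $w^*$-dense in $B_{(\Lip(M),\abs{\cdot})}$ and note that $\mu\mapsto(\langle\mu,f_n\rangle)_{n=1}^\infty$ is an isometric embedding of $(\lipfree{M},\abs{\cdot})$ into $\ell_\infty$ (this is the explicit (ii)$\Rightarrow$(iii) argument from Proposition~\ref{p:weak-star-separable-dual-ball}, which only uses that $(f_n)$ is norming, and the $w^*$ topology is unchanged under equivalent renorming); since $\abs{\cdot}$ is equivalent to the original norm, $\lipfree{M}$ embeds isomorphically into $\ell_\infty$. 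Combined with your (iii)$\Rightarrow$(i) this closes the loop. The rest of your write-up, including the care about which norm is dual to which and the observation that $w^*$-density depends only on the duality pairing, is fine.
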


Note that if $B_{X^*}$ is $w^*$-separable, then $X^*$ also is. However, the converse can fail in a particularly strong way: there is a Banach space $X$ such that $X^*$ is $w^*$-separable and yet there is no equivalent norm on $X$ whose dual unit ball is $w^*$-separable \cite{JoLi}. 
In what follows we will build such an example which is moreover a Lipschitz-free space.

Following \cite{GLPP}, a metric space $M$ is called \textit{Lip-lin injective} (LLI) if for every pointed metric space $N$ and every injective $f \in \Lip(M,N)$ the canonical linearization $\widehat{f}:\Free(M)\to \Free(N)$ is injective. It is proved in~\cite[Corollary 4.14]{GLPP} that uniformly discrete metric spaces are Lip-lin injective.

\begin{prop}\label{p:weak-star-separable-dual}
Let $M$ be a metric space. Then the following are equivalent:
\begin{itemize}
    \item[(i)] $\Lip(M)$ is $w^*$-separable;
    \item[(ii)] there exists a sequence in $\Lip(M)$ that separates points of $\lipfree{M}$;
    \item[(iii)] there exists a bounded one-to-one linear operator $T:\lipfree{M}\to\ell_\infty$.
\end{itemize}
These equivalent conditions also imply the following one
\begin{itemize}
    \item[(iv)] there is a Lipschitz injection of $M$ into $\ell_\infty$; 
\end{itemize}
Moreover, if $M$ is Lip-lin injective then (iv) implies any of (i)--(iii). 
\end{prop}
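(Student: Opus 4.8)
The plan is to prove the equivalences (i)$\Leftrightarrow$(ii)$\Leftrightarrow$(iii), then (iii)$\Rightarrow$(iv), and finally the partial converse under the Lip-lin injectivity hypothesis, mimicking the structure of the proof of Proposition \ref{p:weak-star-separable-dual-ball}. The cycle among (i), (ii), (iii) should be almost formal. For (i)$\Rightarrow$(ii): if $\Lip(M)$ is $w^*$-separable, pick a countable $w^*$-dense set $(f_n)_n\subset\Lip(M)$; since $\lipfree{M}^*=\Lip(M)$ separates points of $\lipfree{M}$ and a $w^*$-dense set in the dual separates points, $(f_n)_n$ separates points of $\lipfree{M}$. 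For (ii)$\Rightarrow$(iii): given a separating sequence $(f_n)_n\subset\Lip(M)$, rescale so that $\norm{f_n}\leq 1$, say replace $f_n$ by $f_n/(2^n(\norm{f_n}+1))$, and set $T(\mu)=(\langle\mu,f_n\rangle)_n$; this is a bounded linear operator into $\ell_\infty$ (even $c_0$ after the rescaling, but $\ell_\infty$ suffices), and it is one-to-one precisely because $(f_n)_n$ separates points. For (iii)$\Rightarrow$(i): given a bounded one-to-one $T:\lipfree{M}\to\ell_\infty$, the coordinate functionals $e_n^*\circ T\in\lipfree{M}^*=\Lip(M)$ separate points of $\lipfree{M}$; their rational linear span is then a countable set whose $w^*$-closure is all of $\Lip(M)$, because a subspace of the dual that separates points of a Banach space is $w^*$-dense (this is the standard Hahn--Banach/bipolar argument, and it is exactly what is used implicitly in the previous proposition).

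For (iii)$\Rightarrow$(iv): compose the isometric embedding $\delta:M\to\lipfree{M}$ with the bounded one-to-one operator $T:\lipfree{M}\to\ell_\infty$. Then $T\circ\delta:M\to\ell_\infty$ is Lipschitz (with constant $\norm{T}$) and injective, since $\delta$ is injective and $T$ is one-to-one. This gives a Lipschitz injection of $M$ into $\ell_\infty$ with no further hypotheses.

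For the last clause, assume $M$ is Lip-lin injective and that (iv) holds, say $g:M\to\ell_\infty$ is a Lipschitz injection. Compose $g$ with a translation so that $g(0)=0$, which does not affect being Lipschitz or injective; then $g\in\Lip(M,\ell_\infty)$ is injective. Regard $\ell_\infty$ as a pointed metric space (which it is). By the definition of Lip-lin injectivity, the linearization $\widehat{g}:\lipfree{M}\to\lipfree{\ell_\infty}$ is injective and bounded. Now invoke Kalton's result \cite[Proposition 5.1]{Kalton} already cited in the proof of Proposition \ref{p:weak-star-separable-dual-ball}: $\lipfree{\ell_\infty}$ embeds isometrically into $\ell_\infty$, via some bounded one-to-one $J:\lipfree{\ell_\infty}\to\ell_\infty$. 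Then $J\circ\widehat{g}:\lipfree{M}\to\ell_\infty$ is bounded and one-to-one, which is condition (iii); hence (i)--(iii) all hold.

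The main obstacle, and the only place where care is genuinely needed, is the role of the hypothesis in the last clause: without Lip-lin injectivity there is no reason that a Lipschitz injection $M\to\ell_\infty$ should linearize to an \emph{injective} operator, so (iv)$\not\Rightarrow$(iii) in general; the argument crucially uses \cite[Corollary 4.14]{GLPP} (or whatever instance of LLI is relevant) to upgrade set-level injectivity of $g$ to injectivity of $\widehat g$. A minor technical point to get right is the basepoint normalization $g(0)=0$ needed to land in $\Lip(M,\ell_\infty)$, and in the (ii)$\Rightarrow$(iii) direction the summable rescaling of the $f_n$'s so that $T$ is well-defined and bounded; both are routine. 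Everything else reduces to the two standard duality facts — that a $w^*$-dense subset of $X^*$ separates points of $X$, and conversely that a point-separating subspace of $X^*$ is $w^*$-dense — exactly as in the preceding proposition, which is why the paper can say the proof is essentially the same.
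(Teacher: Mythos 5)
Your proof is correct and takes essentially the same route as the paper's: the paper simply cites Dancer--Sims for the cycle (i)$\Leftrightarrow$(ii)$\Leftrightarrow$(iii) (which you unpack via the standard duality arguments), obtains (iii)$\Rightarrow$(iv) by restricting $T$ to $\delta(M)$, and proves (iv)$\Rightarrow$(iii) exactly as you do, using Lip-lin injectivity to get $\widehat{g}$ injective and then composing with Kalton's embedding of $\lipfree{\ell_\infty}$ into $\ell_\infty$. Your attention to the basepoint normalization and the rescaling in (ii)$\Rightarrow$(iii) is appropriate but routine, as you note.
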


We stress that \textit{Lipschitz injection} is a map which is Lipschitz and injective and should not be confused with bi-Lipschitz embedding (frequently called Lipschitz embedding in the recent literature).

\begin{proof}
The equivalence between (i), (ii) and (iii) is true for any Banach space in place of $\lipfree{M}$ and can be found in \cite{Dancer}.
The condition (iii) implies (iv) by restricting $T$ to $\delta(M)$.
In order to prove (iv)$\Rightarrow$(iii) let $f:M \to \ell_\infty$ be Lipschitz and injective. 
Then $\widehat{f}:\Free(M)\to \Free(\ell_\infty)$ is injective as $M$ is Lip-lin injective. 
We can thus define $T=S\circ \widehat{f}$ where $S:\Free(\ell_\infty) \to \ell_\infty$ is the linear embedding constructed by Kalton in \cite[Proposition 5.1]{Kalton}.
\end{proof}

For example, if $\abs{M}>\mathfrak{c}$, then (iv) fails and $\Lip(M)$ is not $w^*$-separable.
On the other hand, $\Lip(M)$ is $w^*$-separable for every uniformly discrete metric space of cardinality at most $\mathfrak c$. Indeed, $M$ Lipschitz injects into a bounded uniformly discrete space (just equip $M$ with $\rho(x,y)=\min\set{d(x,y),1)}$) which, in turn, is bi-Lipschitz equivalent to a subset of $2^{\mathbb N}\subset \ell_\infty$. Since $M$ is LLI, the conclusion follows from (iv)$\Rightarrow$(i) above. 

We do not know if the hypothesis of $M$ being LLI can be removed in general from the above proposition. In particular, this proposition does not allow us to decide whether $\Lip(C[0,\omega_1])$ is $w^*$-separable (it is known that $C[0,\omega_1]$ Lipschitz-injects into $\ell_\infty$, see e.g. \cite{Kalton}). We have learned that Leandro Candido and Marek C\'uth have recently proved that $\Lip(C[0,\omega_1])$ is $w^*$-separable (as well as many other Lipschitz spaces).

Finally, concerning the implication (iv)$\Rightarrow$(i)--(iii) one should not forget the following simple observation.

\begin{prop}\label{p:injectivity-on-span}
Suppose that there is a sequence in $\Lip(M)$ that separates points of $M$. Then there is a sequence in $\Lip(M)$ that separates elements of $\mathrm{span}\,\delta(M)$.
\end{prop}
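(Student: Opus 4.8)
The plan is to enlarge the given sequence to a countable family of bounded Lipschitz functions that is closed under multiplication, and then exploit polynomial interpolation to separate molecules. Throughout, an element of $\mathrm{span}\,\delta(M)$ is a finite combination $\mu=\sum_i a_i\delta(x_i)$, and $g\in\Lip(M)$ "does not vanish on $\mu$" means $\sum_i a_i g(x_i)\neq0$; since the $g$'s act linearly, separating every nonzero $\mu$ from $0$ is the same as separating the elements of $\mathrm{span}\,\delta(M)$.

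First I would deal with the fact that products of (possibly unbounded) Lipschitz functions need not be Lipschitz. Fix a bounded, injective, $1$-Lipschitz map $\phi:\R\to\R$ with $\phi(0)=0$, e.g. $\phi(t)=t/(1+|t|)$, and set $h_n=\phi\circ f_n$. Each $h_n$ is bounded, Lipschitz, satisfies $h_n(0)=0$, and $\set{h_n:n\in\mathbb N}$ still separates the points of $M$ because $\phi$ is injective. Let $\mathcal A$ be the set of all finite monomials $h_{n_1}^{k_1}\cdots h_{n_r}^{k_r}$ with $r\geq1$, $n_1<\dots<n_r$ and $k_1,\dots,k_r\geq1$. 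Since a finite product of bounded Lipschitz functions is again bounded and Lipschitz, we have $\mathcal A\subseteq\Lip(M)$, and $\mathcal A$ is clearly countable; enumerate it as a sequence $(g_j)_j$.

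Now take $0\neq\mu\in\mathrm{span}\,\delta(M)$. Since $\delta(0)=0$, we may discard any term supported at $0$ and write $\mu=\sum_{i=1}^m a_i\delta(x_i)$ with the $x_i$ distinct points of $M\setminus\set0$ and all $a_i\neq0$, $m\geq1$. Put $F=\set{0,x_1,\dots,x_m}$. As $\set{h_n}$ separates the finitely many points of $F$, finitely many of them, say $h_{n_1},\dots,h_{n_p}$, already separate $F$; hence $\Psi\colon F\to\R^p$ given by $\Psi(x)=(h_{n_1}(x),\dots,h_{n_p}(x))$ is injective and $\Psi(0)=0$. By polynomial interpolation in $\R^p$ (pick a linear functional $\ell$ on $\R^p$ with $\ell\circ\Psi$ injective on $F$, apply univariate Lagrange interpolation to the distinct values $\ell(\Psi(x))$, and compose with $\ell$), there is a polynomial $P$ on $\R^p$ with $P(\Psi(0))=0$, $P(\Psi(x_1))=1$ and $P(\Psi(x_i))=0$ for $2\leq i\leq m$. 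Because $P(0)=P(\Psi(0))=0$, the polynomial $P$ has zero constant term, so $P(h_{n_1},\dots,h_{n_p})=\sum_\alpha c_\alpha g_\alpha$ is a finite linear combination of elements $g_\alpha\in\mathcal A$. Consequently
$$
\sum_\alpha c_\alpha\Big(\sum_{i=1}^m a_i\,g_\alpha(x_i)\Big)=\sum_{i=1}^m a_i\,P(\Psi(x_i))=a_1\neq0 ,
$$
so $\sum_{i=1}^m a_i g_\alpha(x_i)\neq0$ for at least one $\alpha$, that is, some element of the sequence $(g_j)_j$ does not vanish on $\mu$. This proves that $(g_j)_j$ separates the elements of $\mathrm{span}\,\delta(M)$.

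The only genuinely delicate point is the one addressed at the start: the multiplicatively closed family has to consist of functions in $\Lip(M)$, which is why one must first truncate $f_n$ to the bounded functions $h_n$ (and why it is essential that $P$ have zero constant term, so that no constant function — which is not in $\Lip(M)$ — appears). Once that is arranged, the argument is routine finite-dimensional linear algebra, and I do not anticipate any further obstacle.
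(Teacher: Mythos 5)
Your proof is correct, and it takes a genuinely different route from the paper's. The paper's argument is two lines long but leans on a nontrivial external result: it forms the injective $1$-Lipschitz map $f=(f_n/\norm{f_n})_{n}:M\to\ell_\infty$, notes that $\widehat{f}$ is injective on $\mathrm{span}\,\delta(M)$ because Dirac functionals at distinct points of $\ell_\infty$ are linearly independent, and then composes with Kalton's isometric embedding $i:\Free(\ell_\infty)\to\ell_\infty$; the coordinates of $i\circ\widehat{f}$ give the desired separating sequence. You avoid Kalton's theorem entirely: truncating through $\phi(t)=t/(1+\abs{t})$ produces bounded separating functions vanishing at $0$, the countable family of monomials in these stays inside $\Lip(M)$, and polynomial interpolation on the finite support of a nonzero element of $\mathrm{span}\,\delta(M)$ produces a finite combination of monomials not annihilating it. All the steps check out: finitely many $h_n$ separate a finite set, a generic linear functional on $\R^p$ is injective on a finite set, and $P=Q\circ\ell$ vanishes at $\Psi(0)=0$, so it has no constant term --- which is exactly what keeps constant functions (not in $\Lip(M)$) out of the picture. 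Your approach buys self-containedness and an explicit description of the separating family (it is essentially a Stone--Weierstrass-type observation, and it incidentally re-proves the linear independence of $\set{\delta(x):x\in M\setminus\set{0}}$, which the paper instead invokes as known); the paper's buys brevity and coherence with the surrounding section, where Kalton's embedding is already the central tool.
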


\begin{proof}
The hypothesis implies that there is $f:M \to \ell_\infty$ which is Lipschitz and injective. 
Indeed, let $(f_n)_{n=1}^\infty \subset \Lip(M)$ which separates points of $M$. Then $f:M \to \ell_\infty$ defined by
$$
f(x)=\left(\frac{f_n(x)}{\norm{f_n}}\right)_{n=1}^\infty
$$
is clearly 1-Lipschitz and injective.
Now, let $i:\Free(\ell_\infty) \to \ell_\infty$ be Kalton's isometric embedding from \cite[Proposition 5.1]{Kalton}. 
We do not know if $i\circ \widehat{f}$ is injective but we know that $\widehat{f}\restricted_{\mathrm{span}\,\delta(M)}$ is injective since $\delta(\ell_\infty)$ is linearly independent. So $i\circ \widehat{f}\restricted_{\mathrm{span}\,\delta(M)}$ is injective which is a reformulation of the conclusion.
\end{proof}

We finish by giving an example of a LLI metric space which Lipschitz injects but does not bi-Lipschitz embed into $\ell_\infty$.
Combining Proposition~\ref{p:weak-star-separable-dual} with  Proposition~\ref{p:weak-star-separable-dual-ball-for-equivalent-norm} it is clear that the Lipschitz-free space of any such metric space furnishes the promised example of a Banach space whose dual is $w^*$-separable but does not admit any equivalent norm such that its dual ball is $w^*$-separable.

\begin{ejem}\label{e:WSLLI}
Let $M$ be the metric sum of $([\omega_1]^k,d_k)$, $k \in \mathbb N$, where $[\omega_1]^k$ is the set of sequences of countable ordinals of length $k$ and $d_k$ is Kalton's interlacing graph distance on this set (see \cite[Section 3]{Kalton}).
Then $M$ Lipschitz injects into $\ell_\infty$ but does not admit a bi-Lipschitz embedding into $\ell_\infty$.
Indeed, by \cite[Theorem 3.6]{Kalton}, $M$ does not bi-Lipschitz embed into $\ell_\infty$.
On the other hand $M$ is uniformly discrete and $\abs{M}\leq \mathfrak c$ thus $\Free(M)$ injects into $\ell_\infty$ by the comment after Proposition~\ref{p:weak-star-separable-dual}.
\end{ejem}

\section{Dual Lipschitz-free spaces without the Radon-Nikod\'ym property}
\label{sec:rnp}

The characterization of those metric spaces $M$ for which $\lipfree{M}$ is (isometrically) a dual Banach space remains an important open problem in Lipschitz-free space theory. This issue has been treated e.g. in \cite{Weaver96,Kalton04,GPPR}, and has only recently been solved for compact $M$ in \cite{AGPP}. Several natural subquestions may be asked about the nature of Lipschitz-free spaces $\lipfree{M}$ admitting a predual, including:

\begin{enumerate}
\item[(a)] Must $\lipfree{M}$ have the Radon-Nikod\'ym property (RNP)? Equivalently by \cite[Theorem C]{AGPP}, must $M$ be purely 1-unrectifiable?
\item[(b)] Does there always exist a predual of $\lipfree{M}$ that is made up exclusively of locally flat functions?
\end{enumerate}

Question (a) is motivated by the fact that separable dual spaces have the RNP. In fact, the RNP is equivalent to duality for $\lipfree{M}$ if $M$ is compact \cite[Theorem B]{AGPP}. There are known examples of nonseparable dual Banach spaces without the RNP, but not within the class of Lipschitz-free spaces so far.

Regarding question (b), we recall that a function $f\in\Lip(M)$ is \textit{locally flat} if we have $\lim_{r\to 0}\norm{f|_{B(x,r)}}=0$ for all $x\in M$. If $\lipfree{M}$ is a dual space then the space of all locally flat functions in $\Lip(M)$ is always an isometric predual thereof when $M$ is compact \cite[Theorem B]{AGPP}. In \cite{Kalton04} and later \cite{GPPR}, the non-compact case was studied and sufficient conditions were found under which there is a predual consisting of all locally flat functions that are moreover continuous with respect to a different topology on $M$ (which plays the role of the relative weak$^\ast$ topology). It is a natural suspicion that this may be the general behavior. Note that preduals of Lipschitz-free spaces are in general not unique, so this does not preclude the existence of preduals containing non-locally-flat functions.

In this section we will provide an example of a nonseparable dual Lipschitz-free space that gives a negative answer to both questions above. Our example is a well-known mathematical object: the space $\meas{K}$ of Radon measures on a metrizable compact space $K$. It is not difficult to show that, more generally, $\meas{S}$ is isometrically a Lipschitz-free space for every Polish (i.e. separable and completely metrizable) space $S$.

\begin{prop}\label{freemeasurespace}
Let $S$ be a Polish space. Then $\meas{S}$ is linearly isometric to $\lipfree{M}$ for some metric space $M$. Specifically:
\begin{enumerate}
\item[(a)] if $S$ is finite or countable then $\meas{S}=\ell_1(\abs{S})=\lipfree{M}$ where $M$ is the metric sum of $\abs{S}$ two-point spaces;
\item[(b)] if $S$ is uncountable then
$$
\meas{S} = \ell_1(\mathfrak{c})\oplus_1\left(\bigoplus_{\mathfrak{c}} L_1\right)_1 = \lipfree{M}
$$
where $M$ is the metric sum of $\mathfrak{c}$ two-point spaces and $\mathfrak{c}$ copies of $[0,1]$.
\end{enumerate}
\end{prop}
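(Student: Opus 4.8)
The plan is to prove Proposition~\ref{freemeasurespace} by reducing everything to two building blocks: the identity $\ell_1(\abs S)=\lipfree M$ for $M$ a metric sum of two-point spaces, which is immediate from the description of Lipschitz-free spaces over metric sums recalled in the introduction (a two-point space $\set{0,x}$ with $d(0,x)=1$ has $\lipfree{\set{0,x}}=\R$), and the identity $L_1[0,1]=\lipfree{[0,1]}$, which is a classical fact (see e.g.\ \cite{Weaver2}; $\lipfree\R=L_1(\R)$ via $f\mapsto f'$, and the same computation restricted to $[0,1]$ gives $\lipfree{[0,1]}=L_1[0,1]$). Then, since the Lipschitz-free space of a metric sum of a family $\set{M_\gamma}$ is the $\ell_1$-sum of the $\lipfree{M_\gamma}$, both displayed formulas for $\lipfree M$ follow once we know the corresponding decompositions of $\meas S$.

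First I would dispose of the finite/countable case: if $S$ is discrete with $\abs S=n\leq\omega$ then every Radon measure is a countable combination of point masses, so $\meas S=\ell_1(\abs S)$; a general countable Polish space need not be discrete, but it is countable, hence $\sigma$-compact and zero-dimensional, and I would note that $\meas S$ for any countable metrizable $S$ is still $\ell_1(\abs S)$ because Radon measures on a countable space are purely atomic (every singleton is Borel and the space is a countable union of its points). Combined with the metric-sum computation, this gives part~(a).

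For part~(b) the key input is the measure-theoretic structure theorem: if $S$ is an uncountable Polish space, then $S$ is Borel isomorphic to $[0,1]$ (Kuratowski's theorem), and Borel isomorphic spaces have isometric spaces of measures, so $\meas S=\meas{[0,1]}$. Thus it suffices to decompose $\meas{[0,1]}$. I would split an arbitrary $\mu\in\meas{[0,1]}$ canonically as $\mu=\mu_a+\mu_c$ into its atomic and continuous (non-atomic) parts; this decomposition is isometric for the total variation norm, i.e.\ $\meas{[0,1]}=\meas{[0,1]}_{\mathrm{at}}\oplus_1\meas{[0,1]}_{\mathrm{cont}}$. The atomic part: a purely atomic measure is a countable combination of point masses with absolutely summable weights, and there are $\mathfrak c$ many points, so $\meas{[0,1]}_{\mathrm{at}}=\ell_1(\mathfrak c)$. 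The continuous part is where the real work is: I would use the classical fact (Maharam-type analysis for measures on a standard Borel space) that every non-atomic finite Borel measure on $[0,1]$ is, after normalization, isometric (as an $L_1$-space) to Lebesgue measure on $[0,1]$, and that the space of all continuous measures decomposes as an $\ell_1$-sum of $\mathfrak c$ copies of such $L_1$-spaces --- concretely, pick a maximal family of mutually singular non-atomic probability measures, which has cardinality exactly $\mathfrak c$ (at most $\mathfrak c$ since $\abs{\meas{[0,1]}}=\mathfrak c$, at least $\mathfrak c$ because e.g.\ the Cantor set carries $\mathfrak c$ mutually singular non-atomic measures via shifted/distorted Bernoulli measures), and observe that every continuous measure is supported, up to equivalence, on a countable subfamily, yielding $\meas{[0,1]}_{\mathrm{cont}}=(\bigoplus_{\mathfrak c}L_1)_1$.

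The main obstacle I anticipate is the cardinality bookkeeping in the continuous part: one must check carefully that a maximal mutually-singular family of non-atomic probabilities has cardinality exactly $\mathfrak c$ and that the $\ell_1$-sum over this family genuinely recovers all of $\meas{[0,1]}_{\mathrm{cont}}$ --- the point being that any fixed continuous $\mu$ is absolutely continuous with respect to at most countably many members of the family (since disjointly supported pieces of a finite measure are countable), so $\mu$ lives in a countable sub-sum, exactly as in the standard description of a non-$\sigma$-finite $L_1$-space as an $\ell_1$-sum of $\sigma$-finite pieces. Everything else (the metric-sum identities, $\lipfree{[0,1]}=L_1$, Kuratowski's isomorphism theorem, the atomic/continuous splitting) is either recalled in the introduction or entirely classical, so the write-up should be short, with citations doing most of the heavy lifting.
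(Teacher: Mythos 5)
Your proof is correct and takes essentially the same route as the paper's: both decompose $\meas{S}$ as an $\ell_1$-sum of the spaces $L_1(\mu_i)$ over a maximal family of pairwise mutually singular measures via the Lebesgue/Radon--Nikod\'ym decomposition, then identify each summand as $\R$ or $L_1$ and count to $\mathfrak{c}$; your only (cosmetic) deviations are transporting everything to $[0,1]$ by a Borel isomorphism and splitting off the atomic part by hand, where the paper instead works with $S$ directly and includes the Dirac measures in the maximal family. One small caution: surjectivity of the $\ell_1$-sum representation requires invoking maximality of the family (a nonzero residual singular to every $\mu_i$ would contradict it), not merely the countability of the non-singular overlaps that your sketch emphasizes.
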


\begin{proof}[Proof of Proposition \ref{freemeasurespace}]
Part (a) and the second isometry in part (b) are clear, as the Lipschitz-free spaces over $\set{0,1}$ and $[0,1]$ are isometric to $\R$ and $L_1$, respectively. For the first isometry of part (b), recall that $\abs{S}=\mathfrak{c}$ by the Cantor-Bendixson theorem. We follow the argument in \cite[Proposition 4.3.8(iii)]{AlbiacKalton}, with slight changes and additional details in order to obtain the exact form of $\meas{S}$.

Using Zorn's lemma, find a maximal family $(\mu_i)_{i\in I}$ of Borel probability measures on $S$ that contains the set of Dirac measures $\{\delta_x:x\in S\}$, and such that $\mu_i\perp\mu_j$ for $i\neq j\in I$.
Now define a mapping
$$
T:\meas{S}\to\left(\bigoplus_{i\in I}L_1(\mu_i)\right)_1
$$
by letting $(T\lambda)_i$ be the unique $f_i\in L_1(\mu_i)$ such that $d\lambda=f_i\,d\mu_i+d\nu_i$ where $\nu_i\perp\mu_i$; its existence follows from the Radon-Nikod\'ym and Lebesgue decomposition theorems \cite[Theorem 6.10]{Rudin}.
Let $F\subset I$ be finite. Since $(\mu_i)_{i\in F}$ are pairwise mutually singular, there are pairwise disjoint Borel sets $E_i\subset S$ such that $\mu_i$ is concentrated on $E_i$ for all $i\in F$. Then
$$
\sum_{i\in F}\norm{f_i}_{L_1(\mu_i)} = \sum_{i\in F}\int_{E_i}\abs{f_i}\,d\mu_i = \sum_{i\in F}\abs{\lambda}(E_i) \leq \norm{\lambda} .
$$
It follows that $\norm{T\lambda}=\sum_{i\in I}\norm{f_i}_{L_1(\mu_i)}\leq\norm{\lambda}$. Thus $T$ is a linear operator with $\norm{T}\leq 1$.

Reciprocally, define 
$$
U:\left(\bigoplus_{i\in I}L_1(\mu_i)\right)_1 \to \meas{S}
$$
by $U((f_i)_{i\in I})=\lambda'$ such that $d\lambda'=\sum_{i\in I}f_i\,d\mu_i$. Clearly $\norm{\lambda'}\leq\sum_{i\in I}\norm{f_i}_{L_1(\mu_i)}$ as $\norm{\mu_i}=1$ for all $i$, so $\norm{U}\leq 1$. Since the $\mu_i$ are mutually singular, we have $(T\lambda')_i=f_i$ for all $i\in I$, thus $T\circ U$ is the identity.
Finally, let $\lambda \in \meas{S}$ and denote $(T\lambda)_i=f_i$ and $\lambda'=U(T\lambda)$. 
Notice that $(T(\lambda-\lambda'))_i=f_i-f_i=0$ for any $i$, i.e. $\lambda-\lambda'\perp\mu_i$. 
By the maximality of $(\mu_i)$ we get $\lambda-\lambda'=0$, therefore $U\circ T$ is also the identity.
This proves that $T$ and $U$ are inverse onto isometries.

By Theorem 4.13 in \cite{Brezis} and the Example immediately preceding it, every space $L_1(\mu_i)$ is separable. Note that $L_1(\delta_x)=\R$ for each $x\in S$, and all other $\mu_i$ are purely nonatomic by construction, therefore $L_1(\mu_i)=L_1$ by Theorem 14.9 in \cite{Lacey} and its Corollary. The desired isometric identification now follows from the fact that the cardinality of $\{\mu_i:i\in I\}\setminus\{\delta_x:x\in S\}$ is precisely $\mathfrak{c}$ (see e.g. \cite[Exercise 7.14.94]{Bogachev}).
\end{proof}

\begin{coro}\label{c:DualWithoutRNP}
\label{counterexample}
There is a nonseparable Lipschitz-free space $\lipfree{M}$ with the following properties:
\begin{enumerate}
\item[(i)] $\lipfree{M}$ is a dual space;
\item[(ii)] $\lipfree{M}$ admits both separable and nonseparable isometric preduals;
\item[(iii)] $\lipfree{M}$ does not have the Radon-Nikod\'ym property;
\item[(iv)] there is no predual of $\lipfree{M}$ that consists of locally flat functions.
\end{enumerate}
\end{coro}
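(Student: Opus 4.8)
The plan is to take $M$ as in Proposition~\ref{freemeasurespace}(b) for $S=[0,1]$, so that $\lipfree{M}=\meas{[0,1]}=C([0,1])^*$, and to check the four properties in turn. First, $M$ is nonseparable because the $\mathfrak{c}$ non-base points coming from the two-point summands are pairwise at distance $\geq 1$, so $\lipfree{M}$ is nonseparable; property (i) is then just the Riesz representation theorem. For (iii), one of the summands of $M$ is an isometric copy of $[0,1]$, so by the $\ell_1$-sum description of Lipschitz-free spaces over a metric sum the space $\lipfree{[0,1]}\cong L_1$ is an $\ell_1$-summand of $\lipfree{M}$; since the unit ball of $L_1$ has no extreme points, $L_1$ fails the Krein--Milman property and hence the RNP, and as the RNP passes to closed subspaces, $\lipfree{M}$ fails the RNP (equivalently, $M$ is not purely $1$-unrectifiable).

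For (ii), the space $C([0,1])$ is a separable isometric predual. To obtain a nonseparable one, I would combine Proposition~\ref{freemeasurespace}(b) with the identity $\ell_1(\mathfrak{c})\cong\ell_1(\mathfrak{c})\oplus_1\ell_1(\mathfrak{c})$ to absorb a copy of $\ell_1(\mathfrak{c})$, obtaining
$$
\meas{[0,1]}\;\cong\;\ell_1(\mathfrak{c})\oplus_1\meas{[0,1]}\;\cong\;c_0(\mathfrak{c})^*\oplus_1 C([0,1])^*\;\cong\;\bigl(c_0(\mathfrak{c})\oplus_\infty C([0,1])\bigr)^*,
$$
and $c_0(\mathfrak{c})\oplus_\infty C([0,1])$ is nonseparable.

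The real content is (iv), which I would prove as follows. Let $X$ be any predual of $\lipfree{M}$; under the canonical embedding $X\hookrightarrow X^{**}=\lipfree{M}^*=\Lip(M)$, $X$ becomes a subspace of $\Lip(M)$ whose pre-annihilator in $\lipfree{M}$ is $\{0\}$, i.e. $X$ separates the points of $\lipfree{M}$. Suppose, for contradiction, that every $f\in X$ is locally flat, and fix one of the copies $K$ of $[0,1]$ sitting inside $M$ as a metric summand, so $0\in K$ and $\lipfree{K}\cong L_1\neq\{0\}$ is an $\ell_1$-summand of $\lipfree{M}$. For $f\in X$ and $x\in K$ we have $B_K(x,r)\subseteq B_M(x,r)$, hence $\norm{f\restricted_{B_K(x,r)}}\leq\norm{f\restricted_{B_M(x,r)}}\to 0$, so $f\restricted_K$ is locally flat on $K\cong[0,1]$. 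Taking difference quotients with one argument held equal to the centre of the ball shows that $f\restricted_K$ is differentiable at every point with vanishing derivative, so by the mean value theorem $f\restricted_K$ is constant; since $f(0)=0$ and $0\in K$ this forces $f\restricted_K=0$. Thus $X$ is contained in the annihilator of $\lipfree{K}$ in $\Lip(M)$, so the nonzero space $\lipfree{K}$ lies in the pre-annihilator of $X$ --- contradiction. Hence no predual of $\lipfree{M}$ consists of locally flat functions. (In passing, this argument identifies the locally flat functions on $M$ as exactly those that vanish on every copy of $[0,1]$.)

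I expect (iv) to be the only genuine obstacle, and the point one must be careful about is that it concerns an \emph{arbitrary} predual, not merely the canonical ``all locally flat functions'' space; the pre-annihilator computation above handles this uniformly. It should also be noted that this does not contradict the fact, recalled earlier (see \cite[Theorem B]{AGPP}), that for \emph{compact} $M$ with $\lipfree{M}$ a dual space the locally flat functions do form an isometric predual: our $M$ is noncompact, which is precisely what allows $\lipfree{M}$ to be a dual space while still containing isometric copies of $[0,1]$.
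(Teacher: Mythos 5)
Your proof is correct and follows essentially the same route as the paper: the same choice of $M$ via Proposition~\ref{freemeasurespace}, the same $c_0(\mathfrak{c})\oplus_\infty C([0,1])$ predual for (ii), the same $L_1$-subspace argument for (iii), and the same ``locally flat implies constant on a copy of $[0,1]$, hence cannot separate points'' argument for (iv). The only cosmetic differences are that you derive constancy from a vanishing derivative and the mean value theorem where the paper invokes the fundamental theorem of calculus, and that you spell out the pre-annihilator bookkeeping that the paper leaves implicit.
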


\begin{proof}
Let $M$ be such that $\lipfree{M}=\meas{[0,1]}$ as given by Proposition \ref{freemeasurespace}. That space is clearly the dual of $C([0,1])$, but $M$ contains an isometric copy $I$ of $[0,1]$ so $\lipfree{M}$ cannot have the RNP because it contains $\lipfree{I}=L_1$ (alternatively, the failure of the RNP for $\lipfree{M}$ follows also from the fact that it is a nonseparable Banach space with a separable predual). Notice also that
$$
\big(C([0,1])\oplus_\infty c_0(\mathfrak{c})\big)^* = \meas{[0,1]} \oplus_1 \ell_1(\mathfrak{c}) = \meas{[0,1]}
$$
so $\lipfree{M}$ also has a nonseparable isometric predual.
For the last statement, note that any locally flat function is constant on $I$ by the fundamental theorem of calculus, therefore locally flat functions cannot separate the functionals $\delta(x)$ for different points $x\in I$.
\end{proof}

Corollary \ref{counterexample} provides negative answers to both questions posed at the beginning of this Section. However, the counterexample is strongly dependent on non-separability. The answer to question (a) is positive in the separable case, so it is natural to ask whether question (b) also has a positive answer for separable $M$:

\begin{ques}
Suppose that $M$ is separable and $\lipfree{M}$ is a dual space. Does there exist a subspace $X\subset\Lip(M)$, containing only locally flat functions, such that $X^*=\lipfree{M}$?
\end{ques}

\section*{Acknowledgements}

We are thankful to Leandro Candido and Marek C\'uth for sharing and discussing their recent unpublished results with us.

Much of this work was carried out during visits of the first and second named authors to the Laboratoire de Math\'ematiques de Besan\c con in 2021.

R. J. Aliaga was partially supported by Grant BEST/2021/080 funded by the Generalitat Valenciana, Spain, and by Grant PID2021-122126NB-C33 funded by MCIN/AEI/10.13039/501100011033 and by ``ERDF A way of making Europe''.

G. Grelier was supported by the grant PID2021-122126NB-C32 funded by MCIN/AEI/10.13039/ 501100011033 and by ``ERDF A way of making Europe'', by the European Union and by the grant 21955/PI/22 funded by Fundaci\'on S\'eneca Regi\'on de Murcia; and by MICINN 2018 FPI fellowship with reference PRE2018-083703, associated to grant MTM2017-83262-C2-2-P.

This work was partially supported by the French ANR project No. ANR-20-CE40-0006.

\end{document}